\theoremstyle{plain}
\newtheorem{theo}{Theorem}[section]
\newtheorem{lemma}[theo]{Lemma}
\newtheorem{prop}[theo]{Proposition}
\newtheorem{rema}[theo]{Remark}
\numberwithin{equation}{section}
\def\Z{\mathbb{Z}}
\def\R{\mathbb{R}}
\def\supp{\text{Supp}}
\DeclareMathOperator*{\esssup}{ess\,sup}
\DeclareMathOperator*{\essinf}{ess\,inf}
\newcommand{\su}{\Delta_t}
\newcommand{\lu}{\Delta_\alpha}
\newcommand{\bu}{\Delta_\Omega}
\newcommand{\pu}{\Delta_{\mathrm{past}}}
\newcommand{\fu}{\Delta_{\mathrm{future}}}
\title[Model agnostic IF encoding]{Model agnostic signal encoding by leaky integrate-and-fire, performance and uncertainty}
	\author{Diana Carbajal}
	\address[D. Carbajal]{ Faculty of Mathematics, University of Vienna, Oskar-Morgenstern-Platz 1,
		A-1090 Vienna, Austria}
	\email{diana.agustina.carbajal@univie.ac.at}
	\author{Jos\'{e} Luis Romero}
	\address[J. L. Romero]{Faculty of Mathematics, University of Vienna,Oskar-Morgenstern-Platz 1, A-1090 Vienna, Austria, and Acoustics Research Institute, Austrian Academy of Sciences, Dr. Ignaz Seipel-Platz 2,	AT-1010 Vienna, Austria}
	\email{jose.luis.romero@univie.ac.at}
\thanks{D.C. was supported by the European Union’s programme Horizon Europe, HORIZON-MSCA-2021-PF-01, Grant agreement No. 101064206. 
This research was funded in
whole or in part by the Austrian Science Fund (FWF): 10.55776/Y1199. For
open access purposes, the authors have applied a CC BY public copyright
license to any author-accepted manuscript version arising from this
submission.}
\keywords{Leaky integrate-and-fire, bandwidth, approximate encoding, signal model, reconstruction, earth mover's distance, Wasserstein metric, Hutchinson metric}
\subjclass[2020]{94A12, 94A20, 94A29, 94C60, 65Z05, 44A99}
\begin{document}
\begin{abstract}
Integrate-and-fire is a resource efficient time-encoding mechanism that summarizes into a signed spike train those time intervals where a signal's charge exceeds a certain threshold. We analyze the IF encoder in terms of a very general notion of approximate bandwidth, which is shared by most commonly-used signal models. This complements results on exact encoding that may be overly adapted to a particular signal model. We take into account, possibly for the first time, the effect of uncertainty in the exact location of the spikes (as may arise by decimation), uncertainty of integration leakage (as may arise in realistic manufacturing), and boundary effects inherent to finite periods of exposure to the measurement device. The analysis is done by means of a concrete \emph{bandwidth-based Ansatz} that can also be useful to initialize more sophisticated model specific reconstruction algorithms, and uses the earth mover's (Wasserstein) distance to measure spike discrepancy.
\end{abstract}

\maketitle

\section{Introduction}
\subsection{Time-encoding via integrate-and-fire}
Classical sampling schemes for measuring continuous-time signals are based on synchronous behavior: the signal's amplitude is measured at predetermined –uniform or irregular– instants, governed by a global clock. In many practical scenarios, conventional analog-to-digital converters (ADC) can lead to deficient implementations due to their size and power consumption. This is notably the case with brain computer interfaces based on electroencephalography (EEG) data, where special electrodes are located across the subject's scalp to sense the brain's neuronal activity: as traditional ADC devices are relatively bulky, they must be located outside of the head, thereby limiting the subject's mobility.

The critical need for compact and energy-efficient measuring devices has driven the research of asynchronous, event-driven sampling methods; that is, methods which are not ruled by a clock-pattern but rather capture the times where significant events in a signal stream occur. Time-Encoding Machines (TEM) fall in this category \cite{LT03, miskowicz2018event}. Instead of recording the signal's amplitude, the output of a TEM consists of a sequence of spikes that represents the times where a preset comparator is triggered. Time encoding is a flourishing research field \cite{10171393} that reaches into novel applications such as neuromorphic cameras \cite{Gal20}. In this article, we will consider the Integrate-and-Fire (IF) encoder. Roughly speaking, this TEM consists of an integrator followed by a comparator and simulates in a simplified manner the behavior of a neuron: action potentials (spikes) are generated when the accumulated stimulus (integrator) exceeds a certain threshold (comparator), while the accumulated charge wears out over time (leakage or memory loss); see Figure \ref{fig_if}. 

The seminal papers \cite{LT03,LT04,La05} introduced IF-TEM inspired by asynchronous delta-sigma modulators as an alternative sampling method to recover the amplitude of a bandlimited signal. Crucially, the sampling scheme in \cite{LT03,LT04,La05} incorporates a large bias to the signal, which ensures that the encoder's output is sufficiently dense, surpassing the Nyquist rate dictated by the signal's bandwidth \cite{MR2883827, unser2000sampling}. This in turn facilitates the application of the classical theory of non-uniform sampling, and frame-theoretic iterative reconstruction algorithms \cite{MR1875680, MR2516635}. The trade-off in incorporating a bias adder to the sampler is that the encoder will continue producing spikes even during inactive periods of the input signal. Although exactly encoding all bandlimited signals without applying a bias adder is impossible, as the signal's total energy could be too small with respect to the firing threshold to produce a single spike \cite{FPRAV12}, this is expected to be essentially the only obstruction to encoding. In fact, it
was shown in \cite{FPRAV12} that (leaky) IF-TEM without additive bias \emph{approximately encode} bandlimited functions, with uniform-norm errors that are comparable to the firing threshold.

More recently, it has been shown that a sequence of iterative projections onto adequately chosen convex sets (POCS method) can refine the reconstruction of bandlimited functions from IF samples by enforcing sampling consistency \cite{8361846, TRM22,TRM23}, that is, by searching for a signal that would produce the observed output. Time-encoding theory has also been extended to multi-channel TEM architectures \cite{La05b, ASV19, ASV20}, which have important applications in video time encoding \cite{ASV22} and spiking neural networks \cite{Ada22, ML24}. Moreover, applications of time encoding have been found in the emerging field of unlimited sampling \cite{FB22}.

Considerable effort has been devoted to obtaining comparable results for non-bandlimited signal models. In \cite{GV14} it is shown that perfect reconstruction from IF-TEM is possible for signals belonging to \emph{shift-invariant spaces} \cite{algr01} with suitably smooth generators, as long as the firing rate is high enough. To achieve the desired firing rate, the authors of \cite{GV14} propose replacing the (constant) threshold of the comparator with a sufficiently oscillating test function, a task that may be challenging for common architectures. An alternative algorithm to the one in \cite{GV14} was later proposed in \cite{FC15}, and further extended to the so-called neuromorphic sampling in \cite{KS23b}. Similarly, the approximate reconstruction result of \cite{FPRAV12} has been extended to shift-invariant spaces in \cite{JWC17}. 

Integrate-and-fire time-encoding machines have also been investigated in the important context of \emph{signals with finite rate of innovation} (FRI) \cite{MR1930786} --- which are linear combinations of shifts of certain kernels, with free and partially unknown translation nodes. In \cite{AD20,ATRD20,HAD20,HAD21,NME22,HD23}, the authors considered IF-TEM with no bias adder and investigated \emph{exact encoding} for several FRI signal models, including streams of Diracs, piecewise constant functions, and other spline-like kernels. Practical recovery methods that are independent of the FRI kernel have been introduced in \cite{RKS20,Flo23b,FB23}.

\begin{figure}
    \includegraphics[scale = 0.3]{./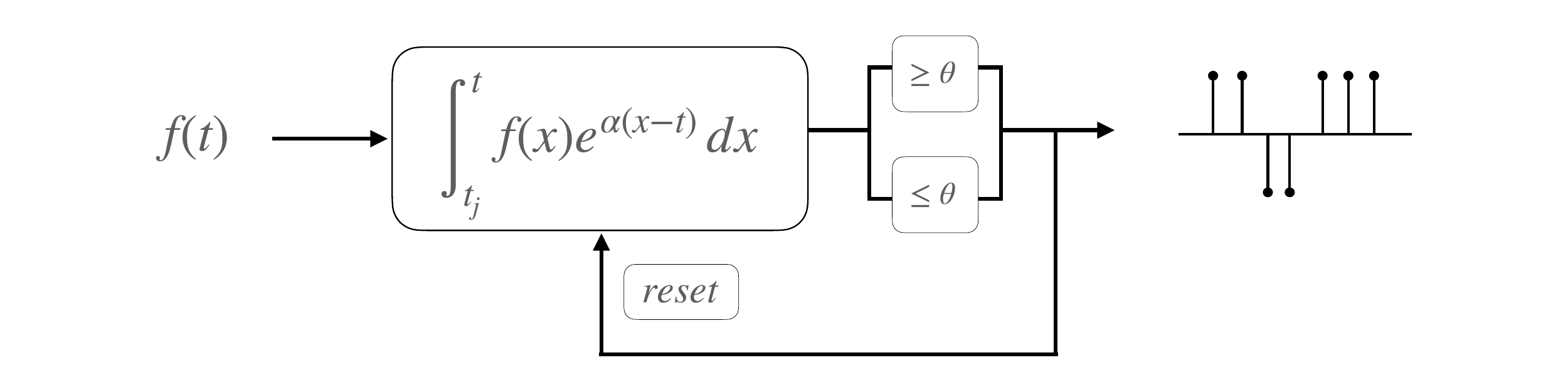}
    \caption{Block diagram of a leaky IF sampler with firing threshold $\theta>0$ and leakage intensity $\alpha>0$.}\label{fig_if}
\end{figure}

\subsection{Contribution}
In this article, we revisit the fundamental question of the performance of the leaky IF as a signal encoder, that is, we investigate whether different signals can be effectively distinguished from their IF outputs. Results to the effect that all signals within a certain class are perfectly encoded by IF provide only a preliminary answer. In fact, such uniqueness claims are \emph{fragile} in the sense that they cease to be (exactly) true as soon as signals are allowed to lie slightly outside the model studied. Intuitively, this is clear as IF provides just a summary of the signal's shape with an accuracy determined by the quality of the sampling device and particularly by the firing threshold.

While the basic intuition behind IF encoding is very general, many arguments used to validate it rely on delicate methods (such as Prony-like algorithms or iterative equation solvers) that may be particular to one specific signal model and sometimes are overly adapted to it. 
(In the literature, sometimes the signal model is even adapted to the IF output, by postulating a number of degrees of freedom that is equal to the number of output spikes.) Such approaches are very valuable because they do more than proving encoding: they provide practical and useful reconstruction algorithms. However, the question then arises whether the derived encoding guarantees are stable under slight model mismatches.

In this article we analyze the IF sampler by means of a very general notion of \emph{approximate bandwidth} which is shared by all commonly-used signal models. In this way, we provide \emph{model-agnostic approximate encoding guarantees} that
show that sufficiently different signals cannot have the same or very similar IF outputs. We do this by exploring the approximation properties of what we call the \emph{bandwidth-based Ansatz}, a concrete approximate decoder that can also be used to initialize more sophisticated model-specific reconstruction algorithms, such as POCS \cite{8361846, TRM22, TRM23}.

Most aspects of our analysis are novel even for bandlimited signals. We take the point of view that signals are analog while our access to them is finite. In the literature, it is common to adapt the signal model to the time-interval where the observations are made, or to assume that the unseen portion of the signal is small. In contrast, we explicitly incorporate uncertainty about the signal's past and future into the encoding guarantees in the form of a gap between the \emph{observation window} and the \emph{inference window}, where reliable prediction is possible. Second, we account for uncertainties in the amount of accumulation leakage (which is sometimes known only up to manufacturing specifications) and in the exact position of the firing spikes (which may be impacted by decimation and grid effects). To the best of our knowledge this is the first work to consider such effects for the IF sampler.

From a more general point of view, our analysis is in line with the investigations of \cite{Mo17,ML19,ML24} that introduce subtle metrics so that the IF map becomes almost isometric. In contrast, we focus on inverse stability (from output to input) with respect to easily interpretable metrics, such as the earth mover's distance (Wasserstein-1 metric). Our work is also technically related to the so-called \emph{cumulative transform} \cite{MR4301193, MR3842648}, which is a signal encoding method based on optimal transport.

\section{Results}
\subsection{The leaky integrate-and-fire sampler}
As signal, we consider a bounded measurable function
$f: \mathbb{R} \to \mathbb{C}$. Although the signal may potentially have infinite support, our access to it is finite, as the sampler is active only during an interval of length $T>0$. For simplicity we assume that the sampling device starts functioning at time $t=0$, and thus acquisition occurs within the \emph{observation window} $[0,T]$. The sampler is determined by two additional parameters: the \emph{firing threshold} $\theta>0$ and the \emph{leakage intensity} $\alpha>0$ that models the amount of energy that the accumulator loses over time.

The \emph{output} of the IF sampler consists of a (possibly empty) set of \emph{firing times} $\{t_1,\ldots, t_n\}$ with
\begin{align}\label{eq_ft}
0 \leq t_1 < \ldots < t_n \leq T
\end{align}
and corresponding \emph{firing phases} $\{q_1,\ldots, q_n\} \subset \mathbb{C}$ with $|q_j|= 1$; see Figure \ref{fig_exp1}.

The sampler is specified recursively: set $t_0 := 0$ and suppose that $t_j$ has been defined for $j<k$; we let $t_k$ be the minimum number $\geq t_{k-1}$ such that
\begin{align}\label{eq_firing}
\left|\int_{t_{k-1}}^{t_{k}} f(x)e^{\alpha(x-t_k)}\,dx\right| = \theta,
\end{align}
provided that such a number exists. Otherwise, the process stops. If the sampler does not fire at all, we let $n=0$. It is easy to see that the set of firing instants is always finite; indeed,
\begin{align}\label{eq_bound_n}
n \leq 1+\frac{1}{\theta} \int_0^T |f(x)|\,dx.
\end{align}
We also record the firing phases
\begin{align}\label{eq_phases}
q_j := \int_{t_{j-1}}^{t_{j}} f(x)e^{\alpha(x-t_j)}\,dx/\Big|\int_{t_{j-1}}^{t_{j}} f(x)e^{\alpha(x-t_j)}\,dx\Big|, \qquad j=1,\dots,n.
\end{align}
In most applications, $f$ is real-valued, and $q_j \in \{-1,1\}$. In this case, the output can be encoded as a signed \emph{train of spikes} \[\sum_{j=1}^n q_j \delta_{t_j},\] where $\delta$ is the Dirac distribution. For technical reasons it is convenient to allow for complex-valued signals and complex firing phases, but we shall use the jargon and intuition of the real-valued case.

\begin{figure}
    \includegraphics[scale = 0.3]{./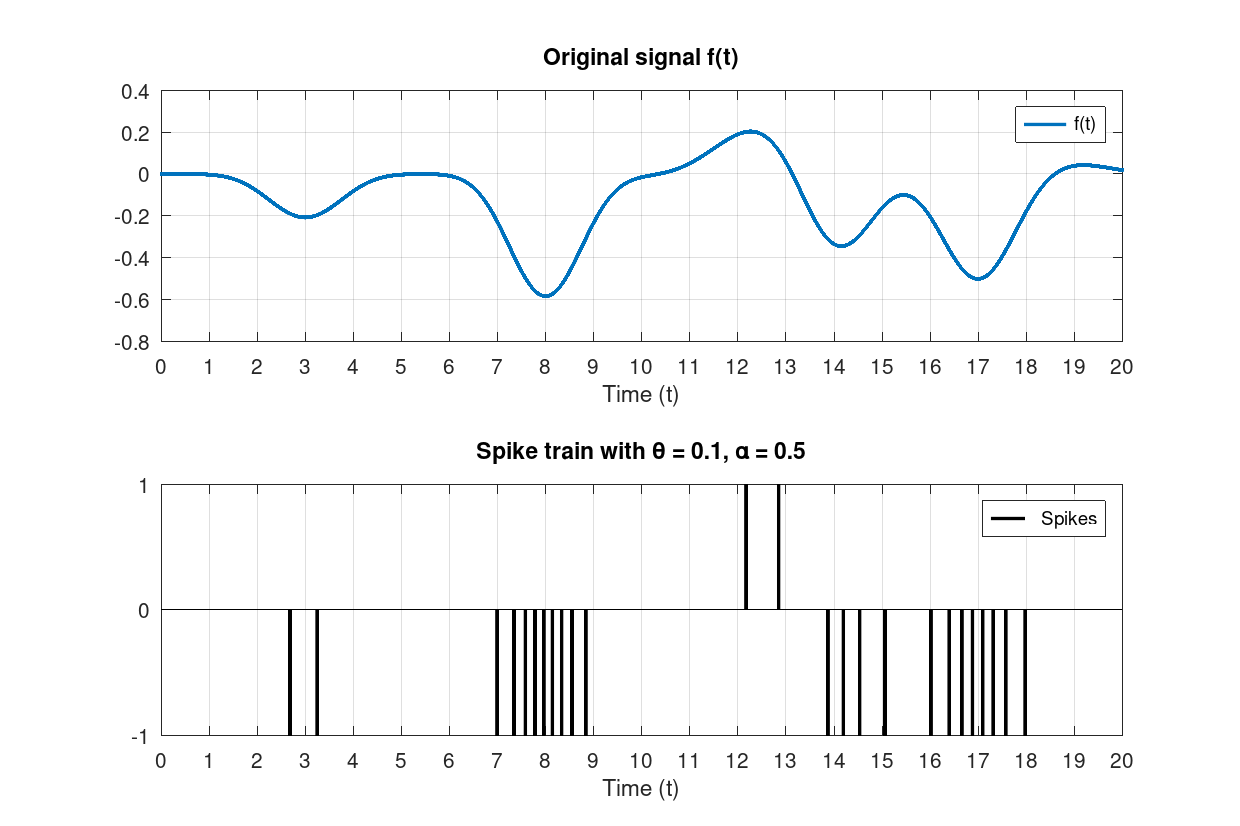}
    \caption{A signal and its IF output; see also Section \ref{sec_num}.}\label{fig_exp1}
\end{figure}

\subsection{Goal}
We aim to estimate a signal $f$ from its IF output. Although in principle it is reasonable to expect to be able to describe $f$ approximately in the observation window $[0,T]$, due to certain uncertainty factors, in general we will only draw conclusions on a smaller \emph{inference window} $[T_1,T_2] \subset [0,T]$.

\subsection{Sources of uncertainty}\label{sec_u}
\subsubsection{Specifications for the leakage intensity}
In realistic setups, the sampler's leakage intensity $\alpha$ conforms to a certain manufacturing specification
\begin{align}\label{eq_alpha_spec}
\alpha \in [\alpha_0-\lu, \alpha_0+\lu],
\end{align}
but may not be exactly known. Rather, only the \emph{leakage specification interval} $[\alpha_0-\lu, \alpha_0+\lu]$ with $\alpha_0-\lu>0$ is known. In case of perfect information, we let the \emph{leakage uncertainty} $\lu$ be $0$.

\subsubsection{The signal's past and future}
As the IF sampler encodes integral differences, its output provides relative information, which can only be turned into unconditional values when complemented with the signal's (unknown) past values $f(t)$, $t<0$. On the other hand, the accumulation leakage indicates that the influence of the signal's past should decay exponentially in time. To model this source of uncertainty, we introduce the \emph{exponentially dampened signal's past charge}
\begin{align}\label{eq_pu}
\pu = \pu(f) := \sup_{t \leq 0, |\alpha'-\alpha_0|\leq \lu} \Big| \int_{-\infty}^t f(x) e^{\alpha'(x-t)}\,dx \Big|.
\end{align}
In the literature, it is often assumed that $f(t)=0$ for $t<0$, in which case $\pu=0$,
or that $\pu$ is small with respect to the desired estimation accuracy. However, these assumptions are not completely justified in practice, and we prefer to explicitly model uncertainty about the past. 
Based on an estimate of the maximum signal size one can give the following safe bound for $\pu$:\footnote{We let $\|f\|_{L^\infty(E)} = \inf\{\lambda>0: |f(x)| \leq \lambda, \mbox{ a. e. }x\in E\}$ 
and write $\|f\|_\infty$ for short, when the underlying set $E$ is the full domain of $f$.}
\begin{align}\label{eq_pu_safe}
\pu \leq \frac{\|f\|_\infty}{\alpha_0 - \lu}.
\end{align}
Similarly, we define the \emph{exponentially dampened signal's future charge} as
	\begin{align}\label{eq_fu}
		\fu = \fu(f) := \sup_{t>t_n,  |\alpha'-\alpha_0|\leq \lu}\left|\int_{t_n}^t f(x)e^{\alpha' (x-t)}\,dx\right|,
	\end{align}
 which also satisfies the conservative bound:
	\begin{align}\label{eq_fu_safe}
		\fu \leq \frac{\|f\|_\infty}{\alpha_0 - \lu}.
	\end{align}
Moreover, if the sampler does not fire anymore after the firing time $t_n$, as often assumed in the literature (complete firing process), then $\fu \leq \theta$.

As we shall see, the uncertainties about the signal's past and future affect the inference performance only moderately, the key quantity being the \emph{logarithmic inference uncertainty} at level $\theta$:
\begin{align}\label{eq_sigma}
\sigma = \sigma(f,\theta) = \log^+\left(\frac{\max\{\pu,\fu\}}{\theta}\right).
\end{align}
(Here and henceforth we write $\log^+(x)=\max\{0,\log(x)\}$.)
 
\subsubsection{The location of the spikes}
While we will assume exact knowledge of the total number of firing times $n$ and the phases $q_1,\ldots,q_n$ --- which in practice are $\pm 1$ and thus robust to errors --- we shall allow for a certain amount of uncertainty in the exact location of spikes so as to account for sources of error such as decimation and latency. Precisely, we shall assume access to
\emph{approximate firing times} $0\leq t'_1 < \ldots < t'_n \leq T$ with total \emph{spike uncertainty}
\begin{align}\label{eq_su}
\su := \sum_{j=1}^n |t_j - t'_j|.
\end{align}
Up to normalization, the spike uncertainty $\su$ is the earth mover's (Hutchinson, Wasserstein-1) distance between the unsigned, normalized spike trains $\frac{1}{n}\sum_{j=1}^n \delta_{t_j}$ and $\frac{1}{n}\sum_{j=1}^n \delta_{t'_j}$, which measures the cost of transporting one point configuration into the other \cite{MR2459454}; see \eqref{eq_wa}.

\subsection{Signal's bandwidth}
We aim for estimation guarantees that are \emph{model agnostic} and only based on a very general notion of bandwidth. We say that a signal $f: \mathbb{R} \to \mathbb{C}$ has \emph{bandwidth $\Omega$ up to tolerance $\bu$} if
\begin{align}
\int_{|\xi| > \Omega} |\hat{f}(\xi)| \,d\xi \leq \bu.
\end{align}
Here $\Omega, \bu>0$ and the Fourier transform is normalized by
$\hat{f}(\xi)=\int_{\mathbb{R}} f(x) e^{-2\pi i \xi x}\,dx$. We also say that $f$ is approximately bandlimited to $[-\Omega,\Omega]$, and will be mostly interested in a tolerance level equal to the IF sampling threshold $\bu=\theta$.

The usual bandlimited functions, $\hat{f}(\xi)=0$, $|\xi|> \Omega$, have of course approximate bandwidth $\Omega$ with tolerance $\bu=0$. A second example is given by functions in a \emph{shift-invariant space}
\begin{align}\label{eq_intro_f}
f(x) = \sum_{j=1}^N \sum_{k \in \mathbb{Z}} c_{j,k} \varphi_j(x-k),
\end{align}
where $\varphi_1, \dots, \varphi_N$ are suitable generator functions, such as splines. As we show in Section \ref{sec_models}, such functions are approximately bandlimited, while the quality of the generators $\varphi_j$ determines the relation between the signal's energy $\|f\|_2$ and its approximate bandwidth $\Omega$. We also estimate the bandwidth of variants of \eqref{eq_intro_f} with free translation nodes (signals with finite rate of innovation); see Propositions \ref{propo:example-SIS} and \ref{prop_2}.

\subsection{The bandwidth-based Ansatz}
	Suppose that the IF output of $f$ is known up to the sources of uncertainty described in Section \ref{sec_u} and let $\Omega>0$ be a number called \emph{target bandwidth}.

\begin{figure}
\includegraphics[scale=0.7]{./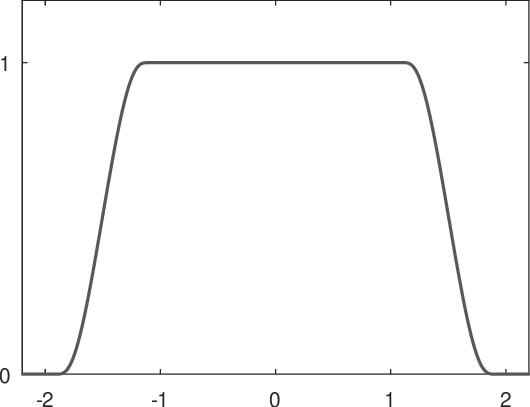}
\caption{The frequency cut-off function $\hat\psi$}
\label{fig_psi}
\end{figure}

	Let us select a \emph{frequency cut-off}: let $\psi: \mathbb{R} \to \mathbb{C}$ be a smooth function such that
	\begin{align}\label{eq_A1}
    \begin{cases}
		 &|\psi(x)|, |\psi'(x)| \leq C e^{-\sqrt{|x|}}, \mbox{ for } x \in \mathbb{R},\\
   		&\hat\psi(\xi) = 0,  \mbox{ for } |\xi|>2,\\
		&\hat\psi(\xi) = 1,  \mbox{ for }|\xi| \leq 1,\\
        &|\hat\psi(\xi) - 1| \leq 1,  \mbox{ for }\xi \in \mathbb{R},
    \end{cases}
	\end{align}
	where $C>0$ is a constant; see \cite{DH98} or \hyperref[note1]{End Note 1} for constructions of such functions and Figure \ref{fig_psi} for a plot.     
	Set $\psi_\Omega(x) := \Omega \cdot \psi(\Omega x)$ and define the \emph{bandwidth-based Ansatz} as
	\begin{align}\label{eq_ba}
		f_\Omega := \theta
		\sum_{j=1}^{n}
		\sum_{k=1}^j q_k
		(\alpha_0\psi_\Omega +  \psi_\Omega')
		* \big( e^{\alpha_0(t'_k-\bullet)} 1_{[t'_j,t'_{j+1})}\big),
	\end{align}
	where $\bullet$ indicates the function's argument, $*$ is the convolution product, and $t'_{n+1}=\infty$.

Our first result quantifies the performance of the bandwidth-based Ansatz.
    
    \begin{theo}\label{thm:main}
		Let a signal $f$ with $\hat{f} \in L^1(\mathbb{R})$ be measured on the observation window $[0,T]$ by an integrate-and-fire sampler with firing threshold $\theta$.
		
		\begin{itemize}
			\item Suppose that the firing times \eqref{eq_ft} are known with uncertainty $\su$, cf. \eqref{eq_su}, while the firing phases are known exactly.
			
			\item Suppose that the leakage parameter $\alpha$ satisfies the specification $\alpha \in [\alpha_0-\lu, \alpha_0+\lu]$, with $\alpha_0-\lu>0$.
			
			\item Let $\sigma$ be the logarithmic inference uncertainty \eqref{eq_sigma}.
		\end{itemize}
		Let $f$ have bandwidth $\Omega \geq 1$ with tolerance $\bu=\theta$ and define the inference window $[T_1,T_2]$ by
  \begin{equation}\label{eq_iw}
			T_1 = {\tfrac{\sigma}{\alpha_0-\Delta_\alpha}}+\tfrac{4}{\Omega}\sigma^2 \quad \text{and}\quad 
			T_2 = T - \tfrac{4}{\Omega} \sigma^2.
		\end{equation}
 Then the bandwidth-based Ansatz \eqref{eq_ba} satisfies
\begin{align}\label{eq_xc_0}
 \|f-f_\Omega\|_{L^\infty[T_1,T_2]} &\leq
 C \theta  \left(\alpha_0 + \lu +\Omega\right) \big[1+
			 n\tfrac{\lu}{\alpha_0 - \lu}
			+\su \cdot \left(\alpha_0 + n\Omega\right)\big],
\end{align}
for an absolute constant $C>0$.
 \end{theo}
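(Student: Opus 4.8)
The plan is to pass both the signal and the Ansatz through the leaky-integration operator $L_\alpha := \alpha + \tfrac{d}{dt}$ and to use the firing rule \eqref{eq_firing} as a consistency constraint. Let $F(t) := \int_{-\infty}^t f(x)e^{\alpha(x-t)}\,dx$ be the true leaky integral, so that $L_\alpha F = f$ and $\hat F(\xi)=\hat f(\xi)/(\alpha+2\pi i\xi)$; in particular $F$ is approximately bandlimited whenever $f$ is. Since $\psi_\Omega'*g=(\psi_\Omega*g)'$, the Ansatz \eqref{eq_ba} rewrites as $f_\Omega = L_{\alpha_0}(\psi_\Omega * G)$, where, after exchanging the order of summation and using $\sum_{j\ge k}1_{[t'_j,t'_{j+1})}=1_{[t'_k,\infty)}$, the profile $G(t)=\theta\sum_{k\le j}q_k e^{\alpha_0(t'_k-t)}$ on $[t'_j,t'_{j+1})$ is a piecewise leaky decay that jumps by $\theta q_j$ at each $t'_j$. (One even has the collapse $f_\Omega=\theta\sum_k q_k\psi_\Omega(\cdot-t'_k)$, showing that the decoder is itself leakage-agnostic, so $\lu$ will enter only perturbatively.) Writing $L_{\alpha_0}=L_\alpha-(\alpha-\alpha_0)$ then yields the master decomposition
\[
f-f_\Omega=\underbrace{L_\alpha(F-\psi_\Omega*F)}_{\mathrm{(I)}}+\underbrace{(\alpha\psi_\Omega+\psi_\Omega')*(F-G)}_{\mathrm{(II)}}+\underbrace{(\alpha-\alpha_0)\,\psi_\Omega*G}_{\mathrm{(III)}} .
\]

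Term (I) equals $f-\psi_\Omega*f$, whose transform $(1-\hat\psi(\xi/\Omega))\hat f(\xi)$ is supported in $|\xi|>\Omega$ with $|1-\hat\psi|\le1$, so $\|\mathrm{(I)}\|_\infty\le\int_{|\xi|>\Omega}|\hat f|\le\theta$ by the bandwidth hypothesis, and is absorbed into the leading term. For (III) I use the crude bound $|G(t)|\le\theta\sum_{k\le j}e^{\alpha_0(t'_k-t)}\le n\theta$ with $\|\psi_\Omega\|_1=\|\psi\|_1$ to get $\|\mathrm{(III)}\|_\infty\le C\lu\,n\theta$. Everything else lives in (II), which I estimate by the kernel bound $\|\alpha\psi_\Omega+\psi_\Omega'\|_1\le C(\alpha+\Omega)$ — finite by the stretched-exponential decay in \eqref{eq_A1}, using $\|\psi_\Omega\|_1=\|\psi\|_1$ and $\|\psi_\Omega'\|_1=\Omega\|\psi'\|_1$ — times a pointwise estimate of $F-G$.

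The pointwise estimate is where the firing rule enters. On each $[t_j,t_{j+1})$ one has $F(t)=\mathrm{past}(t)+\theta\sum_{k\le j}q_ke^{\alpha(t_k-t)}+\int_{t_j}^t f(x)e^{\alpha(x-t)}\,dx$, and the last integral, being the charge accumulated since the previous spike, has modulus $<\theta$ by \eqref{eq_firing}. Comparing with $G$ (through the intermediate profile with true nodes $t_k$) isolates, away from the window edges, three deviations: the sub-threshold charge, of size $\le\theta$, giving the leading $1$ in the bracket; the leakage-exponential mismatch $\theta\sum_{k\le j}|e^{\alpha(t_k-t)}-e^{\alpha_0(t_k-t)}|\lesssim\theta\,n\tfrac{\lu}{\alpha_0-\lu}$, using $|e^{-as}-e^{-bs}|\le|a-b|se^{-(\alpha_0-\lu)s}$ and $\sup_{s}se^{-cs}=1/(ec)$; and the spike-location mismatch obtained by transporting each $t_k\mapsto t'_k$, where the exponential envelopes contribute $\lesssim\alpha_0\su$ via $|\partial_{t_k}e^{\alpha_0(t_k-t)}|\le\alpha_0$ and the $n$ mollified jumps contribute $\lesssim n\Omega\su$ after convolution with the $\psi_\Omega'$ part of the kernel, for a combined factor $\su(\alpha_0+n\Omega)$. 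Multiplying these bulk deviations by $C(\alpha+\Omega)$ and replacing $\alpha\le\alpha_0+\lu$ reproduces the bracket $1+n\tfrac{\lu}{\alpha_0-\lu}+\su(\alpha_0+n\Omega)$ of \eqref{eq_xc_0}.

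The main obstacle is that $|F-G|\lesssim\theta$ holds only in the bulk: near the endpoints of $[0,T]$, and especially outside it, the consistency error is governed instead by the past and future charges $\pu,\fu$, which by \eqref{eq_sigma} can be as large as $\theta e^{\sigma}$. Since $\alpha\psi_\Omega+\psi_\Omega'$ decays only like $\Omega(\alpha+\Omega)e^{-\sqrt{\Omega|x|}}$, the convolution in (II) at an interior point still feels these large values through its tails, which forces the restriction to $[T_1,T_2]$. The quantitative balance is the delicate point: the offset $T_1\ge\tfrac{\sigma}{\alpha_0-\lu}$ ensures the pointwise past term has decayed, since $\mathrm{past}(t)=e^{-\alpha t}\int_{-\infty}^0 fe^{\alpha x}\,dx$ satisfies $|\mathrm{past}(t)|\le e^{-(\alpha_0-\lu)T_1}\pu\le e^{-\sigma}\pu\le\theta$; while the buffer $\tfrac{4}{\Omega}\sigma^2$ makes the kernel tail beat the growth, because $\int_{|x|\ge 4\sigma^2/\Omega}\Omega(\alpha+\Omega)e^{-\sqrt{\Omega|x|}}\,dx\lesssim(\alpha+\Omega)e^{-2\sigma}$, so weighting by $\theta e^{\sigma}$ leaves a contribution $\lesssim\theta(\alpha+\Omega)$ (the constant $4$ giving $\sqrt{\Omega\cdot4\sigma^2/\Omega}=2\sigma$ with room to spare); the symmetric computation at the right endpoint yields $T_2=T-\tfrac{4}{\Omega}\sigma^2$. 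Checking that these two mechanisms simultaneously tame the near-boundary pointwise error and the cross-boundary convolution tails, uniformly over the admissible $\alpha$ and all spike configurations, is the crux; once it is in place, collecting (I)–(III) gives \eqref{eq_xc_0}.
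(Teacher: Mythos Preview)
Your decomposition $f-f_\Omega=(\mathrm{I})+(\mathrm{II})+(\mathrm{III})$ is a repackaging of the paper's proof (Lemma~\ref{lem:perfect} and Proposition~\ref{thm:bw-1} at $\Omega=1$, then rescaled in Theorem~\ref{thm:main_plus}): the same ingredients --- firing-rule consistency in the bulk, kernel-tail vs.\ past/future charge for the boundary buffer, leakage perturbation, spike transport --- appear in both, and your collapse $f_\Omega=\theta\sum_k q_k\psi_\Omega(\cdot-t'_k)$ is a clean observation the paper does not make explicit. The boundary balance you describe (pointwise decay of the past over $\sigma/(\alpha_0-\lu)$, then stretched-exponential tail over $4\sigma^2/\Omega$) matches Step~4 of Lemma~\ref{lem:perfect} exactly.

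The one genuine gap is the spike-location step inside $(\mathrm{II})$. You write that the $n$ jumps contribute $n\Omega\su$ \emph{after convolution with $\psi_\Omega'$}, and then you \emph{also} multiply the whole bracket by $\|\alpha\psi_\Omega+\psi_\Omega'\|_1\sim(\alpha+\Omega)$; this counts the kernel twice and cannot be read as a pointwise bound on $F-G$. The point is that $u_{\tau,\alpha_0}-u_{\tau',\alpha_0}$ is \emph{not} small in $L^\infty$: between $t_k$ and $t'_k$ it is of order $\theta$, not $\theta|t_k-t'_k|$. The paper resolves this with a Wiener amalgam pairing (Lemma~\ref{lem:spikes}): $\|u_{\tau,\alpha_0}-u_{\tau',\alpha_0}\|_{W(L^1,L^\infty)}\le\su\theta(\alpha_0+4n)$ against $\|\alpha_0\psi+\psi'\|_{W(L^\infty,L^1)}$, at $\Omega=1$, then rescales. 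A simpler alternative that works directly at scale $\Omega$ (and which your ``collapsed'' form suggests) is the plain $L^1$--$L^\infty$ pairing: each $e^{\alpha_0(t_k-\cdot)}1_{[t_k,\infty)}-e^{\alpha_0(t'_k-\cdot)}1_{[t'_k,\infty)}$ has $L^1$ norm $\le 2|t_k-t'_k|$, so $\|u_{\tau,\alpha_0}-u_{\tau',\alpha_0}\|_1\le 2\theta\su$, and $\|\alpha\psi_\Omega+\psi_\Omega'\|_\infty\lesssim\Omega(\alpha+\Omega)$; this gives $\theta\su\,\Omega(\alpha+\Omega)$, which even improves on the stated bound by removing the factor $n$ from the $\su$-term. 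Either route closes the argument; as written, yours does not.
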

Theorem \ref{thm:main} is proved in Section \ref{sec_proof}; we now make some remarks.

\begin{itemize}[left=0pt, itemsep=1em]
\item The assumption that $\Omega \geq 1$ is only made to obtain a simpler estimate; see Theorem \ref{thm:main_plus} below for a more general statement. 

\item In most situations the signal's bandwidth is large in comparison to the (maximal) leakage parameter, say 
\begin{align}\label{eq_under}
\alpha_0 + \lu \leq \Omega,
\end{align}
which leads to the simplified error bound
\begin{align}\label{eq_xc_simple}
 \|f-f_\Omega\|_{L^\infty[T_1,T_2]} &\leq
 C \theta \Omega \big[1+ n\tfrac{\lu}{\alpha_0 - \lu}+\su n\Omega\big].
\end{align}
This supports the basic intuition that an approximate reconstruction is possible if $\theta \Omega \ll 1$.

\item (\emph{Accuracy-coverage trade-off})\footnote{Here, $O(\theta)$ denotes a quantity bounded by $C\theta$, where $C$ is a suitable constant.}
: In the regime $\theta \to 0$, the gap between the observation window $[0,T]$ and the inference window $[T_1,T_2]$ is roughly $\log^2(1/\theta)=\log^2(\theta)$, which is moderate in comparison to the corresponding estimation accuracy $O(\theta)$.

\item (\emph{Leakage and spike uncertainties}): To the best of our knowledge, Theorem \ref{thm:main} is the first formal result that accounts for uncertainties in the leakage parameter $\alpha$ (which may be known only up to manufacturing specifications) and the firing times $t_1, \ldots, t_n$ (which may be impacted by decimation and grid effects). Theorem \ref{thm:main} shows that a level of uncertainty\footnote{The meaning of \eqref{eq_un} is that $\lu, \su \leq C_* \theta$ for a certain constant $C_*$, which impacts the constant in \eqref{eq_un_xxx}.}
\begin{align}\label{eq_un}
\lu, \su \lesssim \theta
\end{align}
leads to estimates roughly similar to the ones for the ideal case $\lu=\su=0$. Indeed, under \eqref{eq_under} and \eqref{eq_un}, the conservative bound for the number of spikes \eqref{eq_bound_n} helps reduce \eqref{eq_xc_simple} to
\begin{align}\label{eq_un_xxx}
 \|f-f_\Omega\|_{L^\infty[T_1,T_2]} &\leq
 C \theta \Omega,
\end{align}
for a constant that depends on the minimal leakage $\alpha_0-\lu$ and the signal's charge $\int_0^T |f|$.

\item (\emph{Infinite observation window}): If the observation window is infinite, one can let $T \to \infty$ in Theorem \ref{thm:main} and obtain a conclusion valid on $[T_1,\infty)$. If, moreover, $f$ is assumed to be integrable on $[0,\infty)$, as often done in the literature, then \eqref{eq_bound_n} shows that the IF output is finite. As a consequence $f_\Omega$ is independent of $T$ for $T$ large, and $\fu \leq \theta$. 

If, in addition, one assumes that $\pu \leq \theta$ (as often done in the literature), then $T_1=0$ and we obtain a bound valid in $[0,+\infty)$, which, under \eqref{eq_under}, reads
\begin{align}
\|f-f_\Omega\|_{L^\infty([0,\infty))} \leq  C \theta \Omega \big[1+ n\tfrac{\lu}{\alpha_0 - \lu}+\su n\Omega\big].
\end{align}
The formulation in Theorem \ref{thm:main} is more flexible: any bounds on $\pu$ and $\fu$--- such as the conservative \eqref{eq_pu_safe}, \eqref{eq_fu_safe} --- can be used, and their quality (moderately) impacts our ability to learn the signal.

\item (\emph{Initialization of iterative algorithms}): As with the reconstruction formula for bandlimited functions \cite{FPRAV12}, the bandwidth-based Ansatz \eqref{eq_ba} provides a starting point for iterative algorithms that exploit model specific properties, such as those based on projections onto adequate convex sets (POCS) \cite{8361846, TRM22, TRM23}.

\item To instantiate the bound in Theorem \ref{thm:main}, one needs estimates on the bandwidth $\Omega$ that achieves the tolerance $\bu=\theta$. In Section \ref{sec_models} we provide such estimates for concrete signal models.
\end{itemize}

Section \ref{sec_num} reports on numerical experiments that illustrate Theorem \ref{thm:main}. The results are consistent with \eqref{eq_xc_0} with respect to the threshold level $\theta$ and the spike uncertainty $\lu$, while robustness to leakage uncertainty appears to be even stronger than what \eqref{eq_xc_0} implies.

\subsection{Signal models and quantification of their uncertainty}
In most of the literature on the IF sampler, a specific signal model is assumed and the proposed reconstruction method depends heavily on that specific assumption. For example, if the signal is assumed to be a linear combination of shifted spline-like kernel functions (with free translation nodes), then Prony-like methods can be leveraged to show that signals are \emph{exactly} encoded by the IF sampler. However, this analysis is only effective if the number of translation nodes does not exceed the number of fired spikes (which may be unsatisfactory from the point of view of modeling) and the numerical impact of the spatial distribution of the translation nodes may be unclear from such arguments.

The bandwidth-based Ansatz \eqref{eq_ba} on the other hand only depends on the very general notion of approximate bandwidth, which is shared by many models (see Section \ref{sec_models}). To be precise, consider a IF threshold level $\theta$ and a (non-linear) class of functions $\mathcal{C}$ such that:
\begin{itemize}
\item[(M1)] Each $f \in \mathcal{C}$ has an integrable Fourier transform, and bandwidth $\Omega=\Omega_{\mathcal{C}} \geq 1$ up to tolerance $\theta$.
\item[(M2)] The past \eqref{eq_pu} and future  \eqref{eq_fu} uncertainties of each $f \in \mathcal{C}$ are bounded by a common number $\Delta_{\mathcal{C}}$.
\end{itemize}
Further, set $\sigma=\sigma_{\mathcal{C}} := \log^+(\Delta_{\mathcal{C}}/\theta)$. As mentioned before, assumptions on the past and future uncertainty such as (M2) are often implicitly made either by imposing suitable support conditions, or by assuming that the signal is (finitely) integrable and that the IF sampling process is complete (no more firing is possible). Without such assumptions the safe bounds \eqref{eq_pu_safe} and \eqref{eq_fu_safe} can be used. The key assumption is the uniform bandwidth condition (M1), which is satisfied, for example, by spline-like signals with bounded quadratic mean (see Section \ref{sec_models}).

\begin{theo}\label{coro_1}
Let $\mathcal{C}$ be a class that meets the
bandwidth and uncertainty assumptions (M1), (M2). Let $f_1, f_2 \in \mathcal{C}$ be measured on the observation window $[0,T]$ by an integrate-and-fire sampler with firing threshold $\theta$, whose leakage parameter is known up to the specification $\alpha \in [\alpha_0-\lu, \alpha_0+\lu]$, $\alpha_0-\lu>0$, and satisfies \eqref{eq_under}.

Suppose that $f_1$ and $f_2$ produce $n$ spikes located at $\{t_1,\ldots,t_n\}$ and $\{t'_1,\ldots,t'_n\}$ respectively and the same firing phases $q_j$. Set $\su := \sum_{j=1}^n |t_j - t'_j|$ and define the inference window $[T_1,T_2]$ by \eqref{eq_iw}. Then
\begin{align}\label{eq_fii}
 \|f_1-f_2\|_{L^\infty[T_1,T_2]} &\leq
 C \theta \Omega \big[1+ n\tfrac{\lu}{\alpha_0 - \lu}+\su n\Omega\big].
\end{align}
\end{theo}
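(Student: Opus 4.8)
The plan is to read Theorem \ref{coro_1} as a direct consequence of Theorem \ref{thm:main}, exploiting the fact that the bandwidth-based Ansatz \eqref{eq_ba} depends on the signal only through its recorded output. Concretely, the function $f_\Omega$ defined in \eqref{eq_ba} is built solely from data shared by $f_1$ and $f_2$: the common number of spikes $n$, the phases $q_1,\dots,q_n$, the firing times $\{t'_1,\dots,t'_n\}$, and the fixed parameters $\theta$, $\alpha_0$, and $\Omega$. Hence a \emph{single} Ansatz $f_\Omega$ serves as a common reference, and the strategy is to control $\|f_1-f_\Omega\|$ and $\|f_2-f_\Omega\|$ separately through Theorem \ref{thm:main} and then invoke the triangle inequality.

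First I would apply Theorem \ref{thm:main} to $f_2$. Since $\{t'_j\}$ are exactly the firing times produced by $f_2$, the Ansatz $f_\Omega$ is assembled from $f_2$'s \emph{exact} firing times, so the relevant spike uncertainty vanishes. Assumption (M1) supplies the hypotheses $\hat f_2\in L^1$ and bandwidth $\Omega\ge 1$ with tolerance $\theta$, while (M2) bounds the logarithmic inference uncertainty by $\sigma_{\mathcal{C}}$; together with the assumption \eqref{eq_under}, the simplified estimate \eqref{eq_xc_simple} yields $\|f_2-f_\Omega\|_{L^\infty[T_1,T_2]} \leq C\theta\Omega\big[1+n\tfrac{\lu}{\alpha_0-\lu}\big]$. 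Next I would apply the same theorem to $f_1$, now reading $\{t'_j\}$ as \emph{approximate} firing times for the true locations $\{t_j\}$ of $f_1$'s spikes; the associated spike uncertainty is precisely $\su=\sum_{j=1}^n|t_j-t'_j|$. With the identical bandwidth and past/future hypotheses, \eqref{eq_xc_simple} gives $\|f_1-f_\Omega\|_{L^\infty[T_1,T_2]} \leq C\theta\Omega\big[1+n\tfrac{\lu}{\alpha_0-\lu}+\su\,n\Omega\big]$. Adding the two bounds and absorbing the factor $2$ into the absolute constant produces \eqref{eq_fii}.

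The one point requiring care --- and the only real obstacle --- is that the two applications of Theorem \ref{thm:main} must deliver their conclusions on a \emph{common} inference window. Each signal $f_i$ has its own logarithmic uncertainty $\sigma(f_i)$, and \eqref{eq_iw} would a priori produce two different windows. The remedy is to use the uniform bound $\sigma_{\mathcal{C}}\ge\max\{\sigma(f_1),\sigma(f_2)\}$ from (M2) to define $[T_1,T_2]$ via \eqref{eq_iw}: since $T_1$ is increasing and $T_2$ decreasing in $\sigma$, this window sits inside each signal's individual inference window, so the two estimates --- which Theorem \ref{thm:main} guarantees on the larger windows --- hold a fortiori on the common one. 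Crucially, the error bound \eqref{eq_xc_simple} itself does not depend on $\sigma$ (only the window does), so shrinking the window incurs no penalty in the final estimate, and \eqref{eq_fii} follows.
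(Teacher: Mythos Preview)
Your proposal is correct and follows essentially the same route as the paper: apply Theorem \ref{thm:main} twice with the common Ansatz $f_\Omega$ built from $\{t'_j\}$ (once with zero spike uncertainty for $f_2$, once with uncertainty $\su$ for $f_1$), then use the triangle inequality. Your treatment of the inference window issue---using the class-wide $\sigma_{\mathcal{C}}$ so that the common window lies inside each signal's individual window---is exactly the right resolution, and in fact stated more precisely than in the paper itself.
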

Theorem \ref{coro_1} is proved in Section \ref{sec_coro_1} by comparing each signal to its bandwidth Ansatz. We make some remarks.
\begin{itemize}[left=0pt, itemsep=1em]
\item Theorem \ref{coro_1} provides an \emph{approximate encoding guarantee} by showing that significantly different signals cannot have the same or very similar IF outputs. 

\item If we encode the spike trains produced by $f_1,f_2$ into probability measures
\begin{align}
\mu_1=\frac{1}{n}\sum_{j=1}^n \delta_{t_j}, \qquad
\mu_2=\frac{1}{n}\sum_{j=1}^n \delta_{t'_j},
\end{align}
then, after normalization, the spike uncertainty is the earth mover's (Wasserstein-1, Hutchinson) distance:
\begin{align}\label{eq_wa}
\tfrac{1}{n} \Delta_t = W_1(\mu_1,\mu_2) = \inf_{\pi} \sum_{j,k=1}^n \pi_{j,k} |t_j-t'_k|,
\end{align}
where the infimum is taken over all probability measures $\pi$ on the plane with marginals $\mu_1, \mu_2$, and $\pi_{j,k}=\pi(\{(t_j,t_k)\})$; see, e. g., \cite[Lemma 4.2]{MR4028181}. The stability guarantee \eqref{eq_fii} is substantially stronger than the analogous estimate involving the total variation distance, which is not continuous under small perturbations of the points $t_j$.

\item The encoding guarantees of Theorem \ref{coro_1} complement the more abstract investigations of
\cite{Mo17, ML19, ML24} where input and output metrics are defined so that the IF map becomes almost isometric. In contrast, we focus on inverse stability (from output to input) with respect to concrete metrics such as \eqref{eq_wa}.
\end{itemize}

The remainder of the article is organized as follows: Section \ref{sec_proof} presents a proof of Theorems \ref{thm:main} and Theorem \ref{coro_1} (together with the more technical
Theorem \ref{thm:main_plus}); Section \ref{sec_models} provides examples of signal models with uniform approximate bandwidth, which help illustrate Theorem \ref{coro_1}; Section \ref{sec_num} presents numerical experiments; Section \ref{sec_con} provides final conclusions. The end notes in Section \ref{sec_notes} contain additional technical comments. 

\section{Proof of main result}\label{sec_proof}
Recall that we write $\log^+(x):=\max\{\log(x),0\}$ and $\log^+(0)=0$. The Fourier transform of a function $f$ is denoted $\mathcal{F}f(\xi) = \hat{f}(\xi) = \int_{\mathbb{R}} f(x) e^{-2\pi i x \xi} dx$.

For a given set $\tau=\{t_1,\dots,t_n\}$ and a parameter $\alpha>0$, let us introduce the \emph{potential function}:
	\begin{equation}\label{eq:u}
		u_{\tau,\alpha}(t) := \theta\left(
		\sum_{j=1}^{n-1}
		\sum_{k=1}^j q_k  e^{\alpha(t_k-t)} 1_{[t_j,t_{j+1})}(t) + \sum_{k=1}^n q_k e^{\alpha(t_k-t)} 1_{[t_n,+\infty)}(t)\right), \qquad t \in \mathbb{R}.
	\end{equation}
Throughout the remainder of this section, we assume that $f$ is a function with $\hat{f} \in L^1(\mathbb{R})$ that has been measured on the observation window $[0,T]$ by an integrate-and-fire sampler with firing threshold $\theta$, producing firing times \eqref{eq_ft} and phases
\eqref{eq_phases}. Suppose also that $0 \leq t'_1 < \ldots < t'_n \leq T$ with \eqref{eq_su}, and $\alpha \in [\alpha_0-\lu, \alpha_0+\lu]$, with $\alpha_0-\lu>0$. Define $\sigma$ by \eqref{eq_sigma}. The quantities $T_1, T_2$ are defined by \eqref{eq_iw}, as soon as $\Omega$ is specified.

	\subsection{Sensitivity to uncertainties}
	
	\begin{lemma}[Sensitivity to the leakage intensity]\label{lem:leakage}
		Let $\tau = \{t_1,\dots,t_n\}$. Then
		$$\|u_{\tau,\alpha}-u_{\tau,\alpha_0}\|_{\infty}\leq \theta \cdot \lu \cdot  \frac{n}{\alpha_0 - \lu}.$$
	\end{lemma}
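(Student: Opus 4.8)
The plan is to exploit the explicit piecewise-exponential form of $u_{\tau,\alpha}$ in \eqref{eq:u} and reduce the estimate to a one-variable inequality. First I would record that, on each interval $[t_j,t_{j+1})$ (with the convention $t_{n+1}=+\infty$), both potentials are the \emph{same} finite sum $\theta\sum_{k=1}^{j} q_k e^{\,\cdot\,(t_k-t)}$, while both vanish for $t<t_1$ since every indicator in \eqref{eq:u} is then zero. Hence the difference is supported on $[t_1,+\infty)$ and, on the piece containing a given $t$, equals $\theta\sum_{k=1}^{j} q_k\big(e^{\alpha(t_k-t)}-e^{\alpha_0(t_k-t)}\big)$ with $j\le n$.

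Next, using $|q_k|=1$ and the triangle inequality, I would bound the difference pointwise by $\theta\sum_{k=1}^{j}\big|e^{\alpha(t_k-t)}-e^{\alpha_0(t_k-t)}\big|$. Since $t\ge t_j\ge t_k$ on this piece, each exponent argument $t_k-t$ is nonpositive; writing $s:=t-t_k\ge 0$, the problem reduces to estimating $|e^{-\alpha s}-e^{-\alpha_0 s}|$ uniformly for $s\ge 0$ and $|\alpha-\alpha_0|\le\lu$.

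The core estimate is the elementary inequality
\[
\big|e^{-\alpha s}-e^{-\alpha_0 s}\big|\le \lu\, s\, e^{-(\alpha_0-\lu)s},\qquad s\ge 0,
\]
which follows from the mean value theorem (or from $1-e^{-x}\le x$), using that the relevant intermediate rate is at least $\alpha_0-\lu>0$, so that the slowest possible decay governs the bound. Then I would apply the one-variable optimization $\sup_{s\ge 0} s\,e^{-\beta s}=\tfrac{1}{\beta e}\le\tfrac1\beta$ with $\beta=\alpha_0-\lu$, giving a per-term bound of $\lu/(\alpha_0-\lu)$. Summing the at most $n$ terms and multiplying by $\theta$, then taking the supremum over $t\in\R$, yields $\|u_{\tau,\alpha}-u_{\tau,\alpha_0}\|_\infty\le \theta\,\lu\,\tfrac{n}{\alpha_0-\lu}$, as claimed.

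This argument is essentially a routine computation, so I do not expect a serious obstacle. The only points requiring care are bookkeeping ones: handling the piecewise definition (and in particular the half-open terminal interval $[t_n,+\infty)$) uniformly across pieces, and---crucially---bounding the decay rate from below by $\alpha_0-\lu$ rather than by $\alpha$, since $\alpha$ may be as small as $\alpha_0-\lu$ and this worst case dictates the constant. I note that the sharper factor $1/e$ from $\sup_{s\ge 0} s\,e^{-\beta s}$ could tighten the estimate, but the stated form suffices for the subsequent analysis.
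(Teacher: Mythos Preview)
Your proposal is correct and follows essentially the same route as the paper: piecewise analysis of $u_{\tau,\alpha}$, triangle inequality with $|q_k|=1$, the mean-value theorem to write $|e^{-\alpha s}-e^{-\alpha_0 s}|=s\,|\alpha-\alpha_0|\,e^{-\gamma s}$ for some $\gamma\in(\alpha_0-\lu,\alpha_0+\lu)$, and then the one-variable bound $xe^{-x}\le 1$. The only cosmetic difference is that the paper groups the factor as $\tfrac{1}{\gamma}\,|\alpha-\alpha_0|\,(\gamma s)e^{-\gamma s}$ before applying $xe^{-x}\le 1$, whereas you first bound $\gamma\ge\alpha_0-\lu$ and then optimize $s\,e^{-(\alpha_0-\lu)s}$; both yield the identical constant.
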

	
	\begin{proof}
    If $t <t_1$, then $u_{\tau,\alpha}(t) = u_{\tau,\alpha_0}(t)=0$. Let us now assume that $t\in [t_j, t_{j+1})$, with $j=1,\dots,n-1$.

    We use the following bound. For any $s \geq 0$, we use the mean-value theorem to select $\gamma\in(\alpha_0-\lu,\alpha_0+\lu)$ and estimate
    \begin{align*}\label{eq:mean-val}
			\left|e^{-\alpha s}-e^{-\alpha_0 s}\right| &= e^{-\gamma s} s|\alpha-\alpha_0| = \frac{1}{\gamma} |\alpha-\alpha_0| (\gamma s) e^{-\gamma s}
   \\
   &\leq
   \frac{1}{\gamma} |\alpha-\alpha_0| \leq
     \frac{\lu}{\alpha_0 - \lu},
		\end{align*} 
where we used that $xe^{-x} \leq 1$ for $x\geq 0$. Hence, 
		\begin{align}
			|u_{\tau,\alpha}(t) - u_{\tau,\alpha_0}(t)| 
			&=\theta \cdot \left|  \sum_{k=1}^j q_k  e^{\alpha(t_k-t)}  - \sum_{k=1}^j q_k  e^{\alpha_0(t_k-t)} \right|\\
			&\leq \theta \cdot \sum_{k=1}^j \left| e^{-\alpha(t-t_k)} - e^{-\alpha_0(t-t_k)}\right| \\
			&\leq \theta \cdot \lu \cdot  \frac{n}{\alpha_0 - \lu}.
		\end{align}
		The argument for $t\in [t_n,+\infty)$ is similar.
	\end{proof}

At this point, it is convenient to introduce the following \emph{Wiener amalgam norm} \cite{MR1544780} of a measurable function $g: \mathbb{R} \to \mathbb{C}$,
\begin{align*}
\|g\|_{W(L^1,L^\infty)} = \esssup_{x\in\mathbb{R}} \int_{x-1/2}^{x+1/2} |g(t)|\,dt,
\end{align*}
which controls the average size of $g$ per unit time.
 
	\begin{lemma}[Sensitivity to firing times]\label{lem:spikes} Let $\tau =\{t_1,\dots,t_n\}$, and $\tau'=\{t_1',\dots,t_n'\}$. Then
		$$\|u_{\tau,\alpha} - u_{\tau',\alpha}\|_{W(L^1,L^\infty)} \leq \su \cdot \theta \cdot \left(\alpha + 4 n\right).$$
	\end{lemma}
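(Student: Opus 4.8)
My plan is to avoid working with the piecewise description \eqref{eq:u} directly and instead rewrite the potential as a superposition of one one-sided exponential tail per spike. Collecting the active summands of \eqref{eq:u} for a fixed $t$ shows that
\[
u_{\tau,\alpha}(t)=\theta\sum_{k=1}^n q_k\, e^{\alpha(t_k-t)}\,1_{[t_k,\infty)}(t),
\]
and the same identity holds for $\tau'$ with each $t_k$ replaced by $t'_k$, the phases $q_k$ being common to both. Writing $w_a(t):=e^{-\alpha(t-a)}1_{[a,\infty)}(t)$, the difference then factors spike by spike as
\[
u_{\tau,\alpha}-u_{\tau',\alpha}=\theta\sum_{k=1}^n q_k\,(w_{t_k}-w_{t'_k}).
\]
Since $|q_k|=1$ and $\|\cdot\|_{W(L^1,L^\infty)}$ is subadditive, it suffices to bound $\|w_a-w_b\|_{W(L^1,L^\infty)}$ for a single pair $a,b$ and then sum over $k$.

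For the single-pair estimate I would first observe that for a nonnegative integrand the windowed integral never exceeds the full mass, so $\|g\|_{W(L^1,L^\infty)}\le\|g\|_{L^1(\mathbb{R})}$, and hence $\|w_a-w_b\|_{W(L^1,L^\infty)}\le\|w_a-w_b\|_{L^1(\mathbb{R})}$. Assuming $a\le b$ and setting $\delta=b-a$, a direct computation gives $|w_a-w_b|=e^{-\alpha(t-a)}$ on $[a,b)$, $|w_a-w_b|=(1-e^{-\alpha\delta})e^{-\alpha(t-b)}$ on $[b,\infty)$, and $0$ elsewhere; integrating each piece yields $\|w_a-w_b\|_{L^1(\mathbb{R})}=2(1-e^{-\alpha\delta})/\alpha\le 2\delta$, where the last step uses $1-e^{-x}\le x$. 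Summing over $k$ then produces
\[
\|u_{\tau,\alpha}-u_{\tau',\alpha}\|_{W(L^1,L^\infty)}\le 2\theta\sum_{k=1}^n|t_k-t'_k|=2\theta\,\su,
\]
which is in fact slightly stronger than the stated bound, since $2\le\alpha+4n$ for every $n\ge 1$ (the case $n=0$ being trivial, as both potentials vanish). If one prefers a bound that exhibits the $\alpha$-dependence explicitly, I would estimate the second piece by the sharper windowed value $(1-e^{-\alpha})/\alpha\le 1$ rather than its full mass, which upgrades the per-pair constant to $(1+\alpha)\delta$ and the total to $\theta(1+\alpha)\su$.

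The only genuinely non-mechanical point is the reduction to the single-spike superposition; once that representation is in hand, everything collapses to a one-line $L^1$ computation, and the $\esssup$ over unit windows that makes the Wiener amalgam norm look delicate is dispatched immediately by the nonnegativity comparison $\|g\|_{W(L^1,L^\infty)}\le\|g\|_{L^1}$. I therefore expect the main (and quite mild) obstacle to be simply verifying the collecting-the-tails identity for $u_{\tau,\alpha}$. A more hands-on alternative would be to compare the two potentials interval by interval and bound the jumps across the symmetric differences $[t_j,t_{j+1})\triangle[t'_j,t'_{j+1})$; this also works and is presumably what introduces the $4n$ factor, but the superposition route keeps both the bookkeeping and the resulting constant substantially cleaner.
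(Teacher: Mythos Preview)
Your proof is correct and genuinely different from the paper's. The paper works directly with the piecewise description \eqref{eq:u}: it partitions $[0,\infty)$ into the ``bulk'' sets $S_j=I_j\cap I'_j$ and ``exceptional'' sets $E_j=I_j\setminus I'_j$, bounds $|u_{\tau,\alpha}-u_{\tau',\alpha}|$ pointwise by $\theta\alpha\su$ on the bulk (via the mean-value theorem applied to $e^{\alpha(t_k-t)}-e^{\alpha(t'_k-t)}$) and by the crude $2n\theta$ on the exceptional sets, whose total measure is $\le 2\su$; integrating over a unit window then yields $\su\theta(\alpha+4n)$. Your superposition identity $u_{\tau,\alpha}=\theta\sum_k q_k w_{t_k}$ bypasses the interval bookkeeping entirely and, combined with the trivial domination $\|\cdot\|_{W(L^1,L^\infty)}\le\|\cdot\|_{L^1}$ and the exact computation $\|w_a-w_b\|_{L^1}=2(1-e^{-\alpha|a-b|})/\alpha\le 2|a-b|$, gives the sharper bound $2\theta\su$, with no dependence on $n$ or $\alpha$. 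This is strictly stronger than the lemma as stated (since $\alpha+4n\ge 4>2$ for $n\ge 1$) and would in fact slightly improve the downstream constant in Step~3 of Proposition~\ref{thm:bw-1}. The paper's route has the minor advantage of never leaving the piecewise picture, but your decomposition is both cleaner and quantitatively better; the identity you rely on is immediate once one notices that for $t\in[t_j,t_{j+1})$ exactly the indices $k\le j$ satisfy $t_k\le t$. (Your side remark about the ``$(1+\alpha)\su$'' variant is a little muddled---that bound is only an improvement over $2\su$ when $\alpha<1$---but it plays no role in the main argument.)
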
	
	\begin{proof}
		Define, for $j=0,\dots,n-1$, the intervals $I_j = [t_j,t_{j+1})$ and $I'_j = [t'_j, t'_{j+1})$ ($t_0 = t'_0 = 0$), and $I_n = [t_{n},+\infty)$, $I_n' = [t'_{n}, +\infty)$. Then
		\begin{align}\notag
			\|u_{\tau,\alpha} - u_{\tau',\alpha}\|_{W(L^1,L^\infty)}:&=\esssup_{x\in \R} \int_{I^x} |u_{\tau,\alpha} (t) - u_{\tau',\alpha}(t)|\,dt \\\label{eq_a}
			&= \esssup_{x\in \R} \int_{I^x \cap [0,+\infty)} |u_{\tau,\alpha} (t) - u_{\tau',\alpha}(t)|\,dt,
		\end{align}
		where $I^x := x+[-1/2,1/2]$,  $x\in\R$. Define $S_j :=I_j \cap I'_j$ and $E_j = I_j \setminus I'_j$. We think of $S_j$ as the bulk of the interval $I_j$ and of $E_j$ as a remaining exceptional set. We have the decomposition $$[0,+\infty) =\bigcup_{j=0}^n S_j  \cup \bigcup_{j=0}^n E_j.$$
  To estimate the essential supremum in \eqref{eq_a} we fix $x\in \R$, and first bound the integrand in \eqref{eq_a} on the bulk. For $t\in S_j \cap I^x$ with $j=1,\dots,n$, by the mean-value theorem,
		\begin{align}
			|u_{\tau,\alpha} (t) - u_{\tau',\alpha}(t)| 
			&=\theta \cdot \left|  \sum_{k=1}^{j}e^{\alpha(t_k-t)} q_k - \sum_{k=1}^j e^{\alpha (t'_k-t)} q_k\right|\\
			&\leq \theta \cdot \sum_{k=1}^j \left| e^{\alpha(t_k-t)} - e^{\alpha(t'_k-t)}\right| \\
			&\leq \theta \cdot \sum_{k=1}^j |t_k-t_k'| \cdot \alpha \cdot \sup_{s \in [0,1]} e^{\alpha [s(t_k-t) +(1-s)(t'_k-t)]}
            \\
			&\leq \theta \cdot \su \cdot \alpha.
		\end{align}
		On the other hand, for the exceptional points $t\in E_j\cap I^x$ we use the crude bound $$|u_{\tau,\alpha} (t) - u_{\tau',\alpha}(t)|\leq 2n\theta.$$ We now observe that the measure of the exceptional sets add up to
		$$\left|\bigcup_{j=0}^n E_j \cap I^x\right| \leq 2\su.$$
		Thus, taking into account that $|I^x|=1$,
		\begin{align}
			\int_{I^x \cap [0,+\infty)} |u_{\tau,\alpha} (t) - u_{\tau',\alpha}(t)|\,dt 
			&= \int_{\bigcup_{j=0}^nS_j \cap I^x } |u_{\tau,\alpha} (t) - u_{\tau',\alpha}(t)|\,dt \\
			&\quad+ \int_{\bigcup_{j=0}^n E_j \cap I^x} |u_{\tau,\alpha} (t) - u_{\tau',\alpha}(t)|\,dt
			\leq \su \cdot \theta \cdot \left(\alpha + 4 n \right),\label{eq:fix-x}
		\end{align}
		from which the statement follows.
	\end{proof}
	
	\subsection{Approximate bandwidth \texorpdfstring{$\Omega = 1$}{Omega = 1} and no temporal uncertainty}
	
	\begin{lemma}\label{lem:perfect}
		Let $f$ have bandwidth $\Omega = 1$ with tolerance $\Delta_1$. Then
		\begin{equation}\label{eq_e}
			\|f - (\alpha \psi + \psi')*u_{\tau,\alpha}\|_{L^\infty[T_1,T_2]}\leq \Delta_1 + 28 C  (\alpha+1) \cdot \theta,
		\end{equation}
  where $C$ is the constant in \eqref{eq_A1}.
	\end{lemma}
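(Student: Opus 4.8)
The plan is to realize the potential $u_{\tau,\alpha}$ as the dominant part of the leaky accumulator
\[
F(t) := \int_{-\infty}^t f(x)e^{\alpha(x-t)}\,dx ,
\]
and to recover $f$ from $F$ through an elementary differential identity. Since $f$ is bounded, $F$ is well defined, bounded and absolutely continuous, and satisfies $F' = f - \alpha F$, that is, $f = \alpha F + F'$. Convolving with the smooth, rapidly decaying $\psi$ and integrating by parts (the boundary terms vanish because $\psi$ decays while $F$ stays bounded) turns this into
\[
\psi * f = \alpha(\psi * F) + \psi'*F = (\alpha\psi + \psi')*F .
\]
First I would dispose of the bandwidth error: since $\hat f\in L^1$, $\hat\psi=1$ on $[-1,1]$ and $|\hat\psi-1|\le 1$ throughout, Fourier inversion gives $\|f-\psi*f\|_\infty \le \int_{|\xi|>1}|\hat f(\xi)|\,d\xi \le \Delta_1$, which is precisely the first term on the right-hand side of \eqref{eq_e}.

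Next I would extract $u_{\tau,\alpha}$ from $F$. Splitting $F(t)=\int_{-\infty}^0+\int_0^t$ and using the firing identity $\int_{t_{k-1}}^{t_k}f(x)e^{\alpha(x-t_k)}\,dx = q_k\theta$ on each completed inter-spike interval, a telescoping computation shows that, for $t$ in each interval $[t_j,t_{j+1})$ (with $t_{n+1}=+\infty$),
\[
F(t) = u_{\tau,\alpha}(t) + P(t) + R_2(t),
\]
where $P(t):=\int_{-\infty}^0 f(x)e^{\alpha(x-t)}\,dx$ is the leftover past charge and $R_2(t):=\int_{t_j}^t f(x)e^{\alpha(x-t)}\,dx$ is the charge accumulated since the last spike. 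Writing $R:=F-u_{\tau,\alpha}$, the definitions \eqref{eq_pu}, \eqref{eq_fu} together with the fact that no spike fires inside $[t_j,t_{j+1})$ nor in $(t_n,T]$ give the pointwise bounds $|R(s)|\le\pu$ for $s<0$, $|R(s)|\le e^{-\alpha s}\pu+\theta$ for $0\le s\le T$, and $|R(s)|\le e^{-\alpha s}\pu+\fu$ for $s>T$. Combined with the previous display, this yields
\[
f-(\alpha\psi+\psi')*u_{\tau,\alpha} = (f-\psi*f)+(\alpha\psi+\psi')*R ,
\]
so the whole problem reduces to bounding $\|(\alpha\psi+\psi')*R\|_{L^\infty[T_1,T_2]}$ by a constant multiple of $C(\alpha+1)\theta$.

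The hard part is this last estimate, and it is exactly where the inference window earns its definition. The kernel obeys $|(\alpha\psi+\psi')(x)|\le C(\alpha+1)e^{-\sqrt{|x|}}$ and $\|\alpha\psi+\psi'\|_1\le 4C(\alpha+1)$, so the bulk part $|R_2|\le\theta$ on $[0,T]$ contributes at most $4C(\alpha+1)\theta$ at once. In the three remaining regions --- the past $s<0$, the leftover charge $P(s)$ for $s\ge 0$, and the future $s>T$ --- the coefficient can be as large as $\max\{\pu,\fu\}\le\theta e^{\sigma}$, and I would use that $t\in[T_1,T_2]$ keeps each such region far from $t$. For $s<0$ and for $s>T$ one has $|t-s|\ge 4\sigma^2$, hence $\sqrt{|t-s|}\ge 2\sigma$, so writing $e^{\sigma}e^{-\sqrt{|t-s|}}\le e^{-\sqrt{|t-s|}/2}$ absorbs the factor $e^{\sigma}$ and leaves an integrable tail bounded by an absolute constant times $\theta$; this is precisely what the summand $\tfrac{4}{\Omega}\sigma^2=4\sigma^2$ in \eqref{eq_iw} provides. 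For the leftover charge I would split at $s_0:=\sigma/\alpha$: on $[s_0,\infty)$ one has $e^{-\alpha s}\pu\le e^{-\sigma}\theta e^{\sigma}=\theta$, so this piece is again controlled by $\theta\|\alpha\psi+\psi'\|_1$, while on $[0,s_0]$ one invokes $t-s\ge t-s_0\ge 4\sigma^2$ --- here the summand $\tfrac{\sigma}{\alpha_0-\lu}=\sigma/\alpha$ of $T_1$ enters --- together with the same $e^{\sigma}$-absorption. Summing the four regional contributions produces a bound of the form $(\mathrm{const})\cdot C(\alpha+1)\theta$, and a careful accounting of the constants yields the stated $28\,C(\alpha+1)\theta$. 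The genuinely delicate point throughout is that the competition between the stretched-exponential kernel decay $e^{-\sqrt{|t-s|}}$ and the leakage factor $e^{-\alpha s}$ (which is weak when $\alpha$ is small) is resolved precisely by the two-part structure of $T_1$: the $4\sigma^2$ term is tuned to the kernel and the $\sigma/\alpha$ term to the leakage.
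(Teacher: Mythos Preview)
Your proof is correct and follows essentially the same route as the paper: introduce the leaky accumulator $F$, bound $f-\psi*f$ by $\Delta_1$ via Fourier, estimate $R=F-u_{\tau,\alpha}$ pointwise from the firing equations and the past/future charges, and then control $(\alpha\psi+\psi')*R$ on $[T_1,T_2]$ by playing the stretched-exponential kernel decay against the $4\sigma^2$ buffer. The only quibbles are cosmetic: one has $\sigma/(\alpha_0-\Delta_\alpha)\ge\sigma/\alpha$ rather than equality (which is all you need), and your four-region split yields a slightly larger absolute constant than $28$, whereas the paper's cleaner two-region split---$[\tilde T_1,T]$ with $\tilde T_1=\sigma/(\alpha_0-\Delta_\alpha)$, where $|R|\le 2\theta$, versus its complement, where $|R|\le \Delta_{\mathrm{past}}+\max\{\theta,\Delta_{\mathrm{future}}\}$---lands exactly on $28$.
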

	
	\begin{proof}
		Let us define 
		$$F_\alpha(t):=\int_{-\infty}^t f(x)e^{\alpha(x-t)}\,dx,\quad t\in\R,$$
		and bound
		\begin{align}
			\|f - (\alpha \psi + \psi')*u_{\tau,\alpha}\|_{L^\infty[T_1,T_2]}
			&\leq \|f- (\alpha \psi + \psi')*F_\alpha\|_{L^\infty[T_1,T_2]}\label{eq:bound-1} \\
			&\qquad+ \|(\alpha \psi + \psi')*(F_\alpha-u_{\tau,\alpha})\|_{L^\infty[T_1,T_2]}.\label{eq:bound-2}
		\end{align}
		\subsubsection*{Step 1} We first bound the term on the right-hand side of \eqref{eq:bound-1}. Since $\hat{F}_\alpha(\xi) = \frac{1}{\alpha+2\pi i \xi}\, \hat{f}(\xi),$
		it holds that
		$$\mathcal{F} ((\alpha \psi + \psi')*F_\alpha) (\xi) = \hat{f}(\xi) \hat{\psi}(\xi).$$
		Thus,
  \begin{align}\label{eq_f}
		\begin{aligned}
			\left\| f - (\alpha \psi + \psi')*F_\alpha \right\|_{\infty} &\leq \int_{\R} |\hat{f}(\xi)\hat{\psi}(\xi) - \hat{f}(\xi)|\,d\xi
			= \int_{\R} |\hat{f}(\xi)| \big|\hat{\psi}(\xi) - 1\big|\,d\xi\\
			&\leq  \int_{|\xi|>1} |\hat{f}(\xi)| \,d\xi
			\leq \Delta_1.
		\end{aligned}
  \end{align}
\subsubsection*{Step 2} We show that
		\begin{equation}\label{eq:error-in-tj}
			\left|F_\alpha(t_j)-u_{\tau,\alpha}(t_j)\right| = e^{-\alpha t_j }\left|F_\alpha(0)\right|, \qquad
   j=1,\dots,n.
		\end{equation}
        In terms of the phases $q_j$, the firing equation
        \eqref{eq_firing} reads
        \begin{align}
        \int_{t_{j-1}}^{t_{j}} f(x)e^{\alpha(x-t_j)}\,dx = q_j \theta, \qquad j=1,\dots,n.
        \end{align}
        Thus, for $j=1$, 
		\begin{align}
			\left| F_\alpha(t_1) - u_{\tau,\alpha}(t_1) \right| &=\left| e^{-\alpha t_1 }\int_{-\infty}^0 f(x)e^{\alpha x}\,dx +  \int_{0}^{t_1} f(x)e^{\alpha(x-t_1)}\,dx - \theta
			q_1  \right|
			=  e^{-\alpha t_1 }\left|F_\alpha(0)\right|.
		\end{align}
		For  $j=2,\dots,n$,
		\begin{align}
			\MoveEqLeft \left|F_\alpha(t_j) - u_{\tau,\alpha}(t_j)\right| \\
			&= \left| e^{\alpha(t_{j-1}-t_j)}\int_{-\infty}^{t_{j-1}} f(x)e^{\alpha(x-t_{j-1})}\,dx + \int_{t_{j-1}}^{t_j} f(x)e^{\alpha(x-t_{j})}\,dx - \theta \sum_{k=1}^{j} q_k e^{\alpha(t_k-t_j)} \right| \\
			&= e^{\alpha(t_{j-1}-t_{j})} \left|F_\alpha(t_{j-1})- u_{\tau,\alpha}(t_{j-1}) \right|,	\label{eq:w_j-Af(t_j)}
		\end{align}
		from which \eqref{eq:error-in-tj} follows.

  \subsubsection*{Step 3} Recall the logarithmic inference uncertainty \eqref{eq_sigma} and denote \[\tilde{T}_1:=\frac{1}{\alpha_0-\Delta_\alpha} \sigma.\]
		We will show that
		\begin{equation}\label{eq:error-in-t}
			\left\| u_{\tau,\alpha} - F_\alpha\right\|_{L^\infty[\tilde T_1,T]}\leq2\theta\quad\text{and}\quad\left\| u_{\tau,\alpha} - F_\alpha\right\|_{L^\infty(\R\setminus [\tilde T_1,T])}\leq\pu + \max\{\theta; \fu\}.
		\end{equation}
        The exponentially damped signal past \eqref{eq_pu} provides the bound
        \begin{align}\label{eq_d}
        |F_\alpha(t)|\leq \pu, \qquad t \leq 0.
        \end{align}
        Since $u_{\tau,\alpha} \equiv 0$ on $(-\infty,0)$ we thus have
        \begin{align}\label{eqa1}
        \left\| u_{\tau,\alpha} - F_\alpha\right\|_{L^\infty(-\infty,0)}\leq\pu.
        \end{align}
        We now look into the positive half-line and decompose it as
        \begin{align}
        [0,+\infty) = \bigcup_{j=0}^{n+1} I_j,
        \end{align}
        where $I_j = [t_j,t_{j+1})$ for $j=0,\dots,n-1$ ($t_0 = 0$), $I_n = [t_{n},T)$,
        $I_{n+1}=[T,\infty)$. Let $t\in I_j$, $j=0,\dots,n$, and use \eqref{eq:error-in-tj} and \eqref{eq_d} to estimate
		\begin{align}\label{eq_b}
  \begin{aligned}
			\Big|F_\alpha(t)- u_{\tau,\alpha}(t)\Big|
			& = \Big|e^{\alpha(t_j-t)}\int_{-\infty}^{t_j} f(x)e^{\alpha(x-t_j)}\,dx + \int_{t_j}^{t} f(x)e^{\alpha(x-t)}\,dx - \theta\sum_{k=1}^j q_k  e^{\alpha(t_k-t)} \Big|\\
            &=\Big|e^{\alpha(t_j-t)}\big[F_\alpha(t_j) - u_{\tau,\alpha}(t_j)\big] + \int_{t_j}^{t} f(x)e^{\alpha(x-t)}\,dx\Big|\\
			&\leq e^{\alpha(t_j-t)} \big|F_\alpha(t_j) - u_{\tau,\alpha}(t_j)\big| + \Big| \int_{t_j}^{t} f(x)e^{\alpha(x-t)}\,dx\Big| \\
			&\leq e^{-\alpha t}\big|F_\alpha(0)\big| + \Big| \int_{t_j}^{t} f(x)e^{\alpha(x-t)}\,dx\Big|\\
			&\leq e^{-\alpha t} \pu + \Big| \int_{t_j}^{t} f(x)e^{\alpha(x-t)}\,dx\Big|.
   \end{aligned}
		\end{align}
 For $t\in I_{n+1}$ we argue similarly to find that  $|F_\alpha(t)- u_{\tau,\alpha}(t)| \leq e^{-\alpha t} \pu + | \int_{t_n}^{t} f(x)e^{\alpha(x-t)}\,dx|$ and further use the exponentially damped signal future \eqref{eq_fu} to obtain
        \begin{align}\label{eqa2}
     \left\| u_{\tau,\alpha} - F_\alpha\right\|_{L^\infty[T,+\infty)}\leq \pu + \fu.
        \end{align}
Let us now consider a point in the observation window $t\in[0,T)$ and select $j \in \{0,\ldots,n\}$ with $t \in I_j$. The fact that the integrator did not fire at any time instant $s \in I_j^\circ$ implies that
  \begin{align*}
  \Big| \int_{t_j}^{t} f(x)e^{\alpha(x-t)}\,dx\Big| \leq \theta.
  \end{align*}
Thus, \eqref{eq_b} gives
\begin{align}\label{eq_c}
\Big|F_\alpha(t)- u_{\tau,\alpha}(t)\Big|
\leq e^{-\alpha t} \pu + \theta, \qquad 0 \leq t \leq T.
\end{align}
If, in addition, $t \geq \tilde{T}_1$, we have 
  \begin{align}
  \alpha t \geq (\alpha_0 - \lu) \tilde{T}_1 \geq   \log\big(\tfrac{\Delta_\text{past}}{\theta}\big),
  \end{align}
  so that $e^{-\alpha t} \pu \leq {\theta}$. Hence, 
  \begin{align}\label{eqa3}
 \left\| u_{\tau,\alpha} - F_\alpha\right\|_{L^\infty[\tilde T_1,T]}\leq2\theta.
 \end{align}
 On the other hand, \eqref{eq_c} implies
 \begin{align}\label{eqa4}
  \left\| u_{\tau,\alpha} - F_\alpha\right\|_{L^\infty[0,\tilde T_1)}\leq \pu + \theta.
 \end{align}
 We obtain \eqref{eq:error-in-t} by combining \eqref{eqa1}, \eqref{eqa2}, \eqref{eqa3} and \eqref{eqa4}.
 
		\subsubsection*{Step 4} Let us bound the term in \eqref{eq:bound-2}. We let $t \in \mathbb{R}$ and use \eqref{eq:error-in-t} to estimate
		\begin{align}
			\MoveEqLeft\left|(\alpha \psi + \psi')*(F_\alpha-u_{\tau,\alpha})(t)\right| \leq \int_{\R} |F_\alpha(x)-u_{\tau,\alpha}(x)|\cdot|(\alpha \psi + \psi')(t-x)|\,dx\\
			&\leq 2\theta \int_{\tilde T_1}^T |(\alpha \psi + \psi')(t-x)|\,dx 
			+ \int_{\R\setminus [\tilde T_1,T]} |F_\alpha(x)-u_{\tau,\alpha}(x)|\cdot|(\alpha \psi + \psi')(t-x)|\,dx
            \\
			&\leq 2\theta \|\alpha \psi + \psi'\|_1 + \big(\pu + \max\{\theta; \fu\} \big) \int_{\R\setminus [\tilde T_1,T]} |(\alpha \psi + \psi')(t-x)|\,dx
            \\
			&\leq 3\theta \|\alpha \psi + \psi'\|_1 + 2\max\{\pu, \fu\} \int_{\R\setminus [\tilde T_1,T]} |(\alpha \psi + \psi')(t-x)|\,dx.\label{eq:bound-3}
		\end{align}
        Using \eqref{eq_A1},
		\begin{equation}\label{eq:bound-4}
			\|\alpha\psi+\psi'\|_1 \leq C(\alpha+1) \int_{-\infty}^\infty e^{-\sqrt{|x|}}\,dx = 4C  (\alpha+1).
		\end{equation}
  Second, note that $T_1 = \tilde{T_1} + 4\sigma^2$ and $T_2 = T - 4\sigma^2$, so, if $t\in[T_1,T_2]$,
		\begin{align}
			\int_{\R \setminus [\tilde T_1,T]} |(\alpha\psi+\psi')(t-x)| \,dx 
			&\leq \int_{\R \setminus [-4\sigma^2,4\sigma^2]}  |(\alpha\psi+\psi')(y)| \,dy\\
			&\leq 2C(\alpha + 1) \int_{4\sigma^2}^{+\infty} e^{-\sqrt{y}}\,dy\\
			&= 4C(\alpha +1)\cdot\big(2\sigma+1\big)e^{- 2\sigma}\\
            &\leq 8C(\alpha +1)\cdot e^{-\sigma}.
		\end{align}
  Hence,
  \begin{align}
  \max\{\pu, \fu\} \int_{\R\setminus [\tilde T_1,T]} |(\alpha \psi + \psi')(t-x)|\,dx \leq 8C (\alpha +1)\cdot \theta.\label{eq:bound-5}
  \end{align} 
    Combining \eqref{eq:bound-3}, \eqref{eq:bound-4} and \eqref{eq:bound-5}, we conclude that
		\begin{align}
			\MoveEqLeft\left\|(\alpha \psi + \psi')*(F_\alpha-u_{\tau,\alpha})\right\|_{L^\infty[T_1,T_2]} \leq 28 C  (\alpha+1) \cdot \theta,
		\end{align}
        which provides a bound for \eqref{eq:bound-2}. On the other hand, \eqref{eq:bound-1} was estimated in \eqref{eq_f}, which proves \eqref{eq_e}.
	\end{proof}

 \subsection{Approximate bandwidth 1 with temporal uncertainty}
	\begin{prop}\label{thm:bw-1}
		Let $f$ have bandwidth $\Omega = 1$ with tolerance $\Delta_1$. Then the bandwidth-based Ansatz \eqref{eq_ba} satisfies
		\begin{align}\label{eq_xa}
			\MoveEqLeft\|f-f_1\|_{L^\infty[T_1,T_2]}\leq \Delta_1 + \theta \cdot 28C(\alpha_0 + \lu +1) \times \\ &\qquad\qquad\qquad\qquad\qquad\bigg[1
			+ \lu\cdot  \frac{n}{\alpha_0 - \lu}
			+\su \cdot K(\alpha_0 + n) \bigg]
		\end{align}
		where $C$ is the constant from \eqref{eq_A1} and $K>0$ is a universal constant. 
	\end{prop}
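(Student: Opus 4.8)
The plan is to recognize the $\Omega=1$ Ansatz as a single convolution against the potential function \eqref{eq:u} and then reduce the whole estimate to the three sensitivity bounds already proved. Since $\psi_1=\psi$ and $t'_{n+1}=\infty$, linearity of convolution turns \eqref{eq_ba} into
\begin{equation*}
f_1=(\alpha_0\psi+\psi')*u_{\tau',\alpha_0},\qquad \tau'=\{t'_1,\dots,t'_n\},
\end{equation*}
because the bracketed double sum is exactly $u_{\tau',\alpha_0}$ after merging its two sums via the convention $t'_{n+1}=\infty$. Writing $A:=(\alpha\psi+\psi')*u_{\tau,\alpha}$ for the idealized reconstruction built from the \emph{true} leakage $\alpha$ and \emph{true} firing times $\tau=\{t_1,\dots,t_n\}$, I would then split
\begin{equation*}
\|f-f_1\|_{L^\infty[T_1,T_2]}\le \|f-A\|_{L^\infty[T_1,T_2]}+\|A-f_1\|_{L^\infty(\R)}.
\end{equation*}
Lemma \ref{lem:perfect} bounds the first term by $\Delta_1+28C(\alpha+1)\theta\le\Delta_1+28C(\alpha_0+\lu+1)\theta$, which supplies the $\Delta_1$ and the leading ``$1$'' in the bracket of \eqref{eq_xa}.

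For the second term I would telescope through the two parameters that separate $A$ from $f_1$:
\begin{equation*}
A-f_1=\underbrace{(\alpha-\alpha_0)\,\psi*u_{\tau,\alpha}}_{(I)}+\underbrace{(\alpha_0\psi+\psi')*(u_{\tau,\alpha}-u_{\tau,\alpha_0})}_{(II)}+\underbrace{(\alpha_0\psi+\psi')*(u_{\tau,\alpha_0}-u_{\tau',\alpha_0})}_{(III)}.
\end{equation*}
Terms $(I)$ and $(II)$ are routine: Young's inequality gives $\|(I)\|_\infty\le|\alpha-\alpha_0|\,\|\psi\|_1\,\|u_{\tau,\alpha}\|_\infty\le 4C\,\lu\,\theta n$, using $\|\psi\|_1\le 4C$ and the elementary $\|u_{\tau,\alpha}\|_\infty\le\theta n$, while Young's inequality together with Lemma \ref{lem:leakage} and \eqref{eq:bound-4} gives $\|(II)\|_\infty\le\|\alpha_0\psi+\psi'\|_1\,\|u_{\tau,\alpha}-u_{\tau,\alpha_0}\|_\infty\le 4C(\alpha_0+1)\,\theta\lu\tfrac{n}{\alpha_0-\lu}$. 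Since $\lu n\le(\alpha_0+\lu+1)\tfrac{\lu n}{\alpha_0-\lu}$ (equivalently $\alpha_0-\lu\le\alpha_0+\lu+1$) and $\alpha_0+1\le\alpha_0+\lu+1$, both terms are absorbed into $8C(\alpha_0+\lu+1)\theta\lu\tfrac{n}{\alpha_0-\lu}$, which is dominated by the middle bracket term of \eqref{eq_xa}.

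The crux is term $(III)$, and this is where I expect the main obstacle. One cannot bound it by Young's inequality in $L^\infty$, because $\|u_{\tau,\alpha_0}-u_{\tau',\alpha_0}\|_\infty$ is of order $\theta n$ no matter how small $\su$ is --- the two step functions disagree on the exceptional sets $E_j$ from the proof of Lemma \ref{lem:spikes}. This is precisely why Lemma \ref{lem:spikes} is phrased in the amalgam norm $W(L^1,L^\infty)$, which records that these disagreements occupy total measure $\lesssim\su$ per unit window. To convert this into an $L^\infty$ estimate I would invoke the amalgam convolution relation $\|g*h\|_\infty\le C'\,\|g\|_{W(L^\infty,L^1)}\,\|h\|_{W(L^1,L^\infty)}$. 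As $|\alpha_0\psi+\psi'|\le C(\alpha_0+1)e^{-\sqrt{|x|}}$ and $x\mapsto e^{-\sqrt{|x|}}$ lies in the companion space $W(L^\infty,L^1)$ with an absolute norm, one gets $\|\alpha_0\psi+\psi'\|_{W(L^\infty,L^1)}\le CM(\alpha_0+1)$ for an absolute $M$, whence, by Lemma \ref{lem:spikes},
\begin{equation*}
\|(III)\|_\infty\le CM(\alpha_0+1)\,\su\,\theta\,(\alpha_0+4n)\le 4CM(\alpha_0+\lu+1)\,\theta\,\su\,(\alpha_0+n).
\end{equation*}
Choosing the universal constant $K:=M/7$ makes this at most $28C(\alpha_0+\lu+1)\theta\su K(\alpha_0+n)$, the last bracket term. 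Collecting the three contributions with the Lemma \ref{lem:perfect} bound yields \eqref{eq_xa}; the only remaining work is the constant bookkeeping needed to fit everything under the single prefactor $28C(\alpha_0+\lu+1)$.
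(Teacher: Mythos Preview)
Your proposal is correct and follows essentially the same route as the paper: identify $f_1=(\alpha_0\psi+\psi')*u_{\tau',\alpha_0}$, invoke Lemma~\ref{lem:perfect} for the idealized term, then telescope through the leakage and spike parameters using Lemmas~\ref{lem:leakage} and~\ref{lem:spikes}, with the amalgam pairing $W(L^\infty,L^1)\times W(L^1,L^\infty)\to L^\infty$ handling term~$(III)$. The only cosmetic differences are that the paper telescopes the leakage step as $(\alpha\psi+\psi')*(u_{\tau,\alpha}-u_{\tau,\alpha_0})+(\alpha-\alpha_0)\psi*u_{\tau,\alpha_0}$ rather than your $(I)+(II)$, and writes out the amalgam convolution bound explicitly (with constant $1$, so your $C'$ is not needed).
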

	
	\begin{proof}
		Let $f_1$ be the bandwidth-based Ansatz from \eqref{eq_ba}. Incorporating 
		the notation \eqref{eq:u}, we have that $f_1 = (\alpha_0\psi + \psi')* u_{\tau',\alpha_0}$.
		Our goal is to bound
		\begin{align}
			\|f-(\alpha_0\psi + \psi')* u_{\tau',\alpha_0}\|_{L^\infty[T_1,T_2]} &\leq \|f- (\alpha\psi + \psi')* u_{\tau,\alpha}\|_{L^\infty[T_1,T_2]}  \label{eq:teo-1}\\
			&\quad+\|(\alpha\psi + \psi')* u_{\tau,\alpha} - (\alpha_0\psi + \psi')* u_{\tau,\alpha_0}\|_{\infty}  \label{eq:teo-2}\\
			&\quad+\|(\alpha_0\psi + \psi')* (u_{\tau,\alpha_0} - u_{\tau',\alpha_0})\|_{\infty} \label{eq:teo-3}.
		\end{align}
		\subsubsection*{Step 1: a bound for \eqref{eq:teo-1}} By Lemma \ref{lem:perfect}, we have that 
		\begin{align}
			\|f - (\alpha \psi + \psi')*u_{\tau,\alpha}\|_{L^\infty[T_1,T_2]}&\leq \Delta_1 +  \theta \cdot 28C  (\alpha+1)\\
			&\leq \Delta_1 +  \theta \cdot 28C   (\alpha_0 + \lu +1).
		\end{align}
		\subsubsection*{Step 2: a bound for \eqref{eq:teo-2}} Using Lemma \ref{lem:leakage}, and the fact that $\|u_{\tau,\alpha_0}\|\leq \theta n$, we see that
		\begin{align}
			\MoveEqLeft \|(\alpha\psi + \psi')* u_{\tau,\alpha} - (\alpha_0\psi + \psi')* u_{\tau,\alpha_0}\|_{\infty}\\ 
			&\leq \|(\alpha\psi + \psi')* (u_{\tau,\alpha} - u_{\tau,\alpha_0})\|_{\infty} + \| (\alpha - \alpha_0)\psi * u_{\tau,\alpha_0}\|_{\infty}\\
			&\leq \|\alpha\psi + \psi'\|_1\|u_{\tau,\alpha} - u_{\tau,\alpha_0}\|_\infty + |\alpha-\alpha_0|\|\psi\|_1 \|u_{\tau,\alpha_0}\|_\infty\\
			&\leq \theta\cdot \lu\cdot 4C \cdot \left( \frac{n}{\alpha_0 - \lu}(\alpha+1) + n\right)\\
			&\leq \theta\cdot \lu\cdot 8C(\alpha_0 + \lu +1) \cdot \frac{n}{\alpha_0 - \lu}.
		\end{align}
		\subsubsection*{Step 3: a bound for \eqref{eq:teo-3}} 
        Consider the Wiener amalgam norm \[\|\alpha_0 \psi + \psi'\|_{W(L^\infty,L^1)}
        := \sum_{k\in\Z} \sup_{x\in [-1/2,1/2]} |(\alpha_0 \psi + \psi')(x-k)|,\]
and note that, by \eqref{eq_A1},
$\|\alpha_0 \psi + \psi'\|_{W(L^\infty,L^1)} \leq K (\alpha_0+1) C$ for a universal constant $K$.
		Applying Lemma \ref{lem:spikes}, we see that for all $t \in \mathbb{R}$,
		\begin{align}
			&|(\alpha_0 \psi + \psi')*(u_{\tau,\alpha_0} - u_{\tau',\alpha_0})(t)|
			 \leq  \int_{\R} |(u_{\tau,\alpha_0} - u_{\tau',\alpha_0})(t-x)|\cdot|(\alpha_0 \psi + \psi')(x)| \,dx\\
			&\qquad= \sum_{k\in\Z} \int_{k-1/2}^{k+1/2} |(u_{\tau,\alpha_0} - u_{\tau',\alpha_0})(t-x)|\cdot |(\alpha_0 \psi + \psi')(x)| \, dx\\
   &\qquad= \sum_{k\in\Z} \sup_{y\in [-1/2,1/2]} |(\alpha_0 \psi + \psi')(y-k)|
   \int_{k-1/2}^{k+1/2} |(u_{\tau,\alpha_0} - u_{\tau',\alpha_0})(t-x)| \, dx\\
			&\qquad\leq \|u_{\tau,\alpha_0} - u_{\tau',\alpha_0}\|_{W(L^1,L^\infty)} \cdot \|\alpha_0 \psi + \psi'\|_{W(L^\infty,L^1)} \\
			&\qquad\leq \su \cdot \theta \cdot \left(\alpha_0 + 4n \right) \cdot K \cdot (\alpha_0+1) \cdot C\\
			& \qquad\leq \su\cdot \theta  \cdot 4K \cdot C \cdot (\alpha_0+n) \cdot (\alpha_0+1).
		\end{align}
		Hence,
		\begin{equation}
			\|(\alpha_0\psi + \psi')*(u_{\tau,\alpha_0} - u_{\tau',\alpha_0})\|_\infty\leq \theta \cdot \su \cdot 4CK(\alpha_0 + 1)(\alpha_0 + n).
		\end{equation}
	Combining this with the results of Steps 1 and 2 we obtain \eqref{eq_xa} (with a new universal constant).
	\end{proof}

	\subsection{Proof of Theorem \ref{thm:main}}
 We now state and prove the following more precise version of Theorem \ref{thm:main}.

 \begin{theo}\label{thm:main_plus}
 Let $f$ have bandwidth $\Omega$ with tolerance $\Delta_\Omega = \theta$. Then the bandwidth-based Ansatz \eqref{eq_ba} satisfies
 \begin{align}\label{eq_xc}
 \|f-f_\Omega\|_{L^\infty[T_1,T_2]} &\leq
 C \theta\Big\{ 1 +  \left(\alpha_0 + \lu +\Omega\right) \Big[1+ n\tfrac{\lu}{\alpha_0 - \lu}
			+\su \cdot \left(\alpha_0 + n\Omega\right)\Big]\Big\}.
\end{align}
In particular, if $\Omega \geq 1$,
 \begin{align}\label{eq_xc_plus}
 \|f-f_\Omega\|_{L^\infty[T_1,T_2]} &\leq
 C \theta \left(\alpha_0 + \lu +\Omega\right)\Big[1+ n\tfrac{\lu}{\alpha_0 - \lu}
			+\su \cdot \left(\alpha_0 + n\Omega\right)\Big].
\end{align}
(Here, $C$ denotes an absolute constant.)
 \end{theo}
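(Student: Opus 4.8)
The plan is to reduce the general statement to the already-established bandwidth-one case of Proposition \ref{thm:bw-1} by a dilation argument. I would introduce the rescaled signal $g(x):=f(x/\Omega)$ together with the rescaled sampler data $\tilde{\alpha}_0:=\alpha_0/\Omega$, $\tilde{\lu}:=\lu/\Omega$ and threshold $\tilde{\theta}:=\Omega\theta$. Since $\hat{g}(\xi)=\Omega\hat{f}(\Omega\xi)$, the substitution $\eta=\Omega\xi$ gives $\int_{|\xi|>1}|\hat{g}(\xi)|\,d\xi=\int_{|\eta|>\Omega}|\hat{f}(\eta)|\,d\eta\leq\theta$, so $g$ has bandwidth $1$ with tolerance $\theta$, which is exactly the hypothesis of Proposition \ref{thm:bw-1} with $\Delta_1=\theta$.

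Next I would check that the whole IF apparatus is covariant under this dilation. The substitution $x=r/\Omega$ in \eqref{eq_firing} yields the identity $\int_{\Omega t_{k-1}}^{\Omega t_k} g(r)e^{(\alpha/\Omega)(r-\Omega t_k)}\,dr=\Omega\int_{t_{k-1}}^{t_k} f(x)e^{\alpha(x-t_k)}\,dx$, so that the sampler with leakage $\alpha/\Omega$ and threshold $\Omega\theta$ applied to $g$ fires precisely at the rescaled times $\Omega t_1,\dots,\Omega t_n$, with the same phases $q_j$ and approximate times $\Omega t_j'$; the minimality defining the $t_k$ is preserved because $s\mapsto s/\Omega$ is increasing. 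The same change of variables shows $\pu(g)=\Omega\,\pu(f)$ and $\fu(g)=\Omega\,\fu(f)$, while the spike uncertainty scales to $\Omega\su$. Since the threshold scales by the same factor $\Omega$, the logarithmic inference uncertainty $\sigma$ defined in \eqref{eq_sigma} is \emph{invariant}, and consequently the window produced by \eqref{eq_iw} for $g$ is exactly $[\Omega T_1,\Omega T_2]$.

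The crux is to identify the bandwidth-one Ansatz $g_1$ of $g$ with a dilate of $f_\Omega$. Keeping track of the threshold carried by the potential function \eqref{eq:u}, one first observes that $u_{\Omega\tau',\,\alpha_0/\Omega}(\Omega t)=\Omega\,u_{\tau',\alpha_0}(t)$, the factor $\Omega$ coming precisely from $\tilde{\theta}=\Omega\theta$. Then, since $\psi_\Omega(x)=\Omega\psi(\Omega x)$ gives $\psi_\Omega'(x)=\Omega^2\psi'(\Omega x)$, the pointwise identity $(\tfrac{\alpha_0}{\Omega}\psi+\psi')(\Omega y)=\Omega^{-2}(\alpha_0\psi_\Omega+\psi_\Omega')(y)$ holds. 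Substituting both relations into the convolution $g_1=(\tfrac{\alpha_0}{\Omega}\psi+\psi')*u_{\Omega\tau',\,\alpha_0/\Omega}$ and changing variables, the factors $\Omega$ and $\Omega^2$ cancel to produce $g_1(\Omega t)=f_\Omega(t)$, i.e.\ $g_1=f_\Omega(\cdot/\Omega)$. This is the step I expect to require the most care, as it is where the powers of $\Omega$ must cancel exactly.

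Finally, dilation invariance of the supremum norm gives $\|f-f_\Omega\|_{L^\infty[T_1,T_2]}=\|g-g_1\|_{L^\infty[\Omega T_1,\Omega T_2]}$, and I would apply Proposition \ref{thm:bw-1} to $g$. Inserting $\tilde{\theta}=\Omega\theta$, $\tilde{\alpha}_0=\alpha_0/\Omega$, $\tilde{\lu}=\lu/\Omega$ and $\Omega\su$ into \eqref{eq_xa}, the prefactor simplifies as $\Omega\theta\,(\tilde{\alpha}_0+\tilde{\lu}+1)=\theta(\alpha_0+\lu+\Omega)$, the bracket reorganizes into $1+n\tfrac{\lu}{\alpha_0-\lu}+\su(\alpha_0+n\Omega)$ after the identical cancellations, and the isolated tolerance term $\Delta_1=\theta$ becomes the summand $C\theta\cdot 1$ inside the braces. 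Collecting constants yields \eqref{eq_xc}. The refinement \eqref{eq_xc_plus} for $\Omega\geq1$ then follows by absorbing the lone $\theta$ into the main term, using that $\alpha_0+\lu+\Omega\geq\Omega\geq1$ and that the bracket is at least $1$.
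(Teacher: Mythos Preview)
Your proposal is correct and follows essentially the same dilation argument as the paper's proof: reduce to Proposition~\ref{thm:bw-1} by rescaling the spatial variable by $\Omega$, check that all IF quantities transform covariantly, identify the rescaled Ansatz with $f_\Omega$, and substitute the rescaled parameters into \eqref{eq_xa}. The only cosmetic difference is the normalization: the paper uses $\tilde f(x)=\tfrac{1}{\Omega}f(x/\Omega)$, keeping the threshold at $\theta$ (so the tolerance drops to $\theta/\Omega$ and an extra factor $\Omega$ appears in the norm), whereas you use $g(x)=f(x/\Omega)$ and let the threshold become $\Omega\theta$ (so the tolerance stays $\theta$ and the sup-norm is preserved); the two choices lead to the same final bound after identical cancellations.
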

 \begin{proof}
 We use a rescaling argument that helps improve the dependence on the various parameters.
 The function $\tilde{f}(x)=\frac{1}{\Omega}f(\frac{x}{\Omega})$ has bandwidth $\Omega=1$ with tolerance $\Delta_1 =   \frac{\theta}{\Omega}$:
		\begin{equation}
			\int_{|\xi|\geq1} |\hat{\tilde{f}}(\xi)| \,d\xi = \int_{|\xi|\geq1} |\hat{f}(\Omega\xi)|\,d\xi = \frac{1}{\Omega} \int_{|u|\geq \Omega} |\hat{f}(u)|\,du \leq \frac{\theta}{\Omega}.
		\end{equation}
		Furthermore, 
		\begin{equation}
			\int_{\Omega t_k}^{\Omega t_{k+1}} \tilde{f}(x) e^{\frac{\alpha}{\Omega} (x-\Omega t_k)}\,dx = \frac{1}{\Omega} \int_{\Omega t_k}^{\Omega t_{k+1}} f\left(\frac{x}{\Omega}\right) e^{\frac{\alpha}{\Omega} (x-\Omega t_k)}\,dx = \int_{t_k}^{t_{k+1}} f(u) e^{\alpha(u-t_k)}\,du = q_k \theta.
		\end{equation} 
        Considering also the behavior of $\tilde{f}$ in the open intervals $(\Omega t_k,\Omega t_{k+1})$ we see that measuring $\tilde{f}$ on the observation window $[0,\Omega T]$ by an integrate-and-fire sampler with firing threshold $\theta$ and leakage parameter $\frac{\alpha}{\Omega}$ produces firing times $\{\Omega t_1,\dots,\Omega t_n\}$ and firing phases $\{q_1,\dots,q_n\}$. 

        We apply Proposition \ref{thm:bw-1} to $\tilde{f}$ and the sampler specified as above. Note that the quantities \eqref{eq_pu} and \eqref{eq_fu} associated with $\tilde{f}$ and $f$ coincide, while $\lu$ and $\su$ are linked by a rescaling factor. Denoting
		$$\tilde{f}_1 = \theta
		\sum_{j=1}^{n}
		\sum_{k=1}^j q_k
		\left(\frac{\alpha_0}{\Omega}\psi +  \psi'\right)
		* \left( e^{\frac{\alpha_0}{\Omega}(\Omega t'_k-\bullet)} 1_{[\Omega t'_j,\Omega t'_{j+1})}\right),$$
		where $t'_{n+1}=\infty$, we have that $\Omega \tilde{f}_1 (\Omega x) = f_\Omega(x)$ and
  \begin{align}
  \|f - f_\Omega\|_{L^\infty[T_1, T_2]} = \|\Omega \tilde{f}(\Omega \cdot) - \Omega \tilde{f}_1(\Omega \cdot)\|_{L^\infty[T_1, T_2]} = \Omega \|\tilde{f} - \tilde{f}_1\|_{L^\infty[\Omega T_1, \Omega T_2]}.
  \end{align} 
  On the other hand, Proposition \ref{thm:bw-1} gives
		\begin{align}
			\|\tilde{f} - \tilde{f}_1\|_{L^\infty[\Omega T_1, \Omega T_2]}
			&\leq C \theta \Big\{ \frac{1}{\Omega} +  \Big(\frac{\alpha_0 + \lu}{\Omega} +1\Big) \times
			\\&\qquad\quad \Big[1+ \lu\cdot  \frac{n}{\alpha_0 - \lu}
			+\su \Omega \cdot \Big(\frac{\alpha_0}{\Omega} + n\Big)\Big]\Big\},
		\end{align}		
  for an absolute constant $C>0$, which readily gives \eqref{eq_xc}.

  Finally, if $\Omega \geq 1$, 
then $\left(\alpha_0 + \lu +\Omega\right) \geq 1$ and the first $1$ in \eqref{eq_xc} can be absorbed by the other terms, by choosing a possibly larger constant $C$. 
\end{proof}

\subsection{Proof of Theorem \ref{coro_1}}\label{sec_coro_1}
We use Theorem \ref{thm:main} with $f=f_j$, $j=1,2$ and $f_\Omega$ as in \eqref{eq_ba} (so, in one case we use exact firing times and in the other one approximate ones). We then obtain bounds valid on the inference windows associated with each signal $f_j$, which contain the inference window associated with the whole class $\mathcal{C}$. As observed after the statement of the theorem, under \eqref{eq_under} the bounds simplify to 
\begin{align}
 \|f_j-f_\Omega\|_{L^\infty[T_1,T_2]} &\leq
 C \theta \Omega \big[1+ n\tfrac{\lu}{\alpha_0 - \lu}+\su n\Omega\big].
\end{align}
Finally, we estimate $\|f_1-f_2\|_{L^\infty[T_1,T_2]} \leq 
 \|f_1-f_\Omega\|_{L^\infty[T_1,T_2]} +  \|f_2-f_\Omega\|_{L^\infty[T_1,T_2]}$ and the conclusion follows.
\qed
 
\section{Examples of model bandwidth}\label{sec_models}

To complement Theorem \ref{coro_1} we provide some examples of approximately bandlimited functions. We present certain families of signal models and discuss which parameters need to be kept uniform in order to have a common bandwidth estimate.

\subsection{Shift-invariant spaces}
The \emph{shift-invariant space} generated by $\{\varphi_1,\dots,\varphi_N\} \subset L^2(\R)$ is
\begin{equation}\label{eq:SIS}
V_\Z(\varphi_1,\dots,\varphi_N):=\overline{\text{span}}\left\{\varphi_i(\cdot- k)\,:\, i=1,\dots,N,\,k\in\Z \right\}.
\end{equation}
One usually assumes that the system $\left\{\varphi_i(\cdot- k)\,:\,i=1,\dots,N,\,k\in\Z^d \right\}$ is a Riesz basis of $V_\Z(\varphi_1,\dots,\varphi_N)$,
		i.e., that there exist two constants $A,B>0$ such that
		\begin{equation}\label{eq_Q1}
			A\|c\|^2_{\ell^2}\leq \Big\|\sum_{i=1}^N \sum_{k\in\Z} c_{i,k} \varphi_i(\cdot-k)\Big\|_{L^2}^2 \leq B \|c\|_{\ell^2}^2,\quad \forall\, c=\{c_{i,k}\}_{i,k} \in \ell^2(\{1,\dots,N\}\times \Z).
		\end{equation}
Often, the generators $\varphi_1, \ldots, \varphi_N$ are not exactly bandlimited, but only approximately so, as is the case for example with the Gaussian function. As a relaxation of bandlimitedness, we assume that the frequency profiles of the generators are enveloped as
\begin{align}\label{eq_Q2}
    |\hat \varphi_i(\xi)|\leq D(1+|\xi|)^{-s-1},\quad\xi\in\R, \qquad i=1,\dots,N,
\end{align}
with $s>0$ and $D>0$. 

Let us consider the collection of all (energy bounded) signals that belong to one such signal model. More precisely, let $\mathcal{C}_{\mathbb{Z}}(A,B,D,s)$ be the collection of all functions $f \in L^2(\mathbb{R})$ with $\|f\|_2 \leq 1$ that belong to $V_\Z(\varphi_1,\dots,\varphi_N)$ for some generators satisfying \eqref{eq_Q1} and \eqref{eq_Q2}.
    
    \begin{prop}\label{propo:example-SIS}
	Every function $f \in \mathcal{C}_{\mathbb{Z}}(A,B,D,s)$ has bandwidth $\Omega$ up to tolerance $\bu=\theta$ provided that
		\begin{equation}\label{eq:omega-sis}
			\lfloor \Omega \rfloor  \geq   
   \Big(\sqrt{\tfrac{N}{A}} \tfrac{2D}{s\theta}\Big)^{\frac{1}{s}}.
		\end{equation}
	\end{prop}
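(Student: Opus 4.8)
The plan is to bound the tail $\int_{|\xi|>\Omega}|\hat f(\xi)|\,d\xi$ by separating the contribution of each generator's frequency profile from that of the expansion coefficients. First I would expand $f=\sum_{i=1}^N\sum_{k\in\Z}c_{i,k}\varphi_i(\cdot-k)$ on the Fourier side as
\[
\hat f(\xi)=\sum_{i=1}^N \hat\varphi_i(\xi)\,m_i(\xi),\qquad m_i(\xi):=\sum_{k\in\Z}c_{i,k}\,e^{-2\pi i k\xi},
\]
where the series converges in $L^2$ and each \emph{symbol} $m_i$ is $1$-periodic with $\int_0^1|m_i(\xi)|^2\,d\xi=\|c_{i,\cdot}\|_{\ell^2}^2$ by Parseval for Fourier series.

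Next I would estimate the tail band by band. Covering $\{\xi>\Omega\}$ by the unit intervals $[l,l+1)$ with integer $l\geq\lfloor\Omega\rfloor$ (and symmetrically on the negative axis), I would pull out the supremum of the profile and apply Cauchy--Schwarz to the symbol, exploiting its periodicity:
\[
\int_l^{l+1}|\hat\varphi_i(\xi)|\,|m_i(\xi)|\,d\xi
\leq \Big(\sup_{[l,l+1)}|\hat\varphi_i|\Big)\Big(\int_0^1|m_i|^2\Big)^{1/2}
\leq D\,(1+|l|)^{-s-1}\,\|c_{i,\cdot}\|_{\ell^2},
\]
using the envelope \eqref{eq_Q2}. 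Summing the decreasing tail $\sum_{|l|\geq\lfloor\Omega\rfloor}(1+|l|)^{-s-1}$ against the integral comparison $\int_{\lfloor\Omega\rfloor-1}^\infty(1+x)^{-s-1}\,dx=\tfrac{1}{s}\lfloor\Omega\rfloor^{-s}$ then yields, for each $i$, a contribution of size $\tfrac{2D}{s}\lfloor\Omega\rfloor^{-s}\|c_{i,\cdot}\|_{\ell^2}$.

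Finally I would recombine the $N$ generators and invoke the Riesz structure. Cauchy--Schwarz in $i$ gives $\sum_{i=1}^N\|c_{i,\cdot}\|_{\ell^2}\leq\sqrt{N}\,\|c\|_{\ell^2}$, while the lower Riesz bound in \eqref{eq_Q1} together with $\|f\|_2\leq1$ gives $\|c\|_{\ell^2}\leq A^{-1/2}$. Collecting the estimates produces
\[
\int_{|\xi|>\Omega}|\hat f(\xi)|\,d\xi\leq\sqrt{\tfrac{N}{A}}\,\tfrac{2D}{s}\,\lfloor\Omega\rfloor^{-s},
\]
and requiring the right-hand side to be at most $\theta$ is precisely the threshold \eqref{eq:omega-sis}. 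I expect the main obstacle to be the band-by-band step: one must align the integral over $\{|\xi|>\Omega\}$ with the per-band supremum of the decaying profile and the periodized $L^2$-mass of each symbol so that the summed tail is correctly anchored at $\lfloor\Omega\rfloor$ and returns the sharp exponent $\lfloor\Omega\rfloor^{-s}$; the remaining combinatorial and Riesz-bound steps are routine.
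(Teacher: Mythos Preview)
Your proposal is correct and follows essentially the same route as the paper: Fourier expansion into periodic symbols $m_i$, band-by-band decomposition into unit intervals with the envelope \eqref{eq_Q2} pulled out, periodicity plus Parseval to turn each band into $\|c_{i,\cdot}\|_{\ell^2}$, Cauchy--Schwarz in $i$, the lower Riesz bound, and an integral comparison for the tail sum. The only cosmetic difference is the order in which Cauchy--Schwarz is applied (you do it in $\xi$ first, the paper in $i$ first), and you should be slightly careful with the indexing on the negative axis---for $l<0$ the supremum of $(1+|\xi|)^{-s-1}$ on $[l,l+1)$ is $(1+|l+1|)^{-s-1}$ rather than $(1+|l|)^{-s-1}$---but this is harmless after the obvious symmetric bookkeeping.
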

    \begin{rema}
    The bandwidth estimate deteriorates as the lower stability constant $A$ in \eqref{eq_Q1} becomes small, and improves as the degree of frequency concentration $s$ grows.
    \end{rema}
	\begin{proof}[Proof of Proposition \ref{propo:example-SIS}]
		Let $f=\sum_{i=1}^N  \sum_{k\in\Z}c_{i,k} \varphi_i(\cdot -k)$, with $\{c_{i,k}\}_{i,k}\in \ell^2$ and $\|f\|_2 \leq 1$. Then $$\hat f(\xi) = \sum_{i=1}^N \hat{c}_i(\xi) \hat \varphi_i(\xi),\quad \xi\in\R,$$ where $\hat{c}_i(\xi):=\sum_{k\in\Z} c_{i,k} e^{-2\pi i k \xi}$. For $\Omega$ as in \eqref{eq:omega-sis},
		\begin{align}
			\int_{|\xi|>\Omega} |\hat{f}(\xi)| \,d\xi
			&\leq \int_{|\xi|>\Omega}  \sum_{i=1}^N |\hat{c}_i(\xi)| |\hat{\varphi}_i(\xi)| \,d\xi
			\leq D \int_{|\xi|>\Omega}  \sum_{i=1}^N |\hat{c}_i(\xi)| (1+|\xi|)^{-s-1} \,d\xi\\*
   \\*
   &\leq D \int_{\xi>\lfloor \Omega \rfloor}  \sum_{i=1}^N (|\hat{c}_i(\xi)|+|\hat{c}_i(-\xi)|) (1+\xi)^{-s-1} \,d\xi\\
			&\leq D  \sum_{\ell\geq \lfloor \Omega \rfloor}
   \int_{\ell}^{\ell+1}
   (1+\xi)^{-s-1}  \sum_{i=1}^N (|\hat{c}_i(\xi)| + |\hat{c}_i(-\xi)|) \,d\xi\\
    &\leq D  \sum_{\ell\geq \lfloor \Omega \rfloor}(1+\ell)^{-s-1} \int_{0}^1 
    \sum_{i=1}^N (|\hat{c}_i(\xi)|+|\hat{c}_i(-\xi)|)  \,d\xi\\*
			&\leq \frac{2D}{s \lfloor \Omega\rfloor ^{s}} \int_0^1  \sum_{i=1}^N |\hat{c}_i(\xi)|   \,d\xi\\
			&\leq N^{1/2} \frac{2D}{s\lfloor \Omega \rfloor ^{s}} \left( \sum_{i=1}^N \int_0^1  |\hat{c}_i(\xi)|^2 \,d\xi \right)^{1/2}\\
			&\leq N^{1/2} \frac{2 D }{s\lfloor \Omega\rfloor ^{s}} \left( \sum_{i=1}^N \sum_{k\in\Z} |c_{i,k}|^2\right)^{1/2}\\
			&\leq \left(\frac{N}{A}\right)^{1/2} \frac{2D \|f\|_2}{s\lfloor \Omega\rfloor ^{s}}\\
			&\leq \left(\frac{N}{A}\right)^{1/2} \frac{2D}{s\lfloor \Omega\rfloor ^{s}} \leq \theta;
		\end{align}
see also \hyperref[note2]{End Note 2}.
	\end{proof}
\subsection{Spline-like spaces with free nodes}
To further illustrate the notion of approximate bandwidth, we consider a signal of the form
\begin{align}\label{eq_P1}
f(x)=\sum_{k} c_k \varphi(x-\lambda_k)
\end{align}
where the translation nodes $\lambda_k < \lambda_{k+1}$ are arbitrary (and possibly not exactly known). This setup occurs often in applications and is analyzed with Prony-like methods under the assumption that
$|\hat{\varphi}(\xi)|$ is bounded away from $0$ near the origin --- or under even stronger assumptions such as the Strang-Fix conditions. After rescaling, we may assume that
\begin{align}\label{eq_P2}
\tau := \essinf_{\xi \in [-1/2,1/2]} |\hat{\varphi}(\xi)| >0.
\end{align}
As before, the generating function $\varphi$ may not be exactly bandlimited, but satisfies an enveloping condition
\begin{align}\label{eq_P3}
|\hat{\varphi}(\xi)| \leq D(1+|\xi|)^{-s-1}, \qquad \xi \in \mathbb{R}.
\end{align}
We want to argue that all such models share a certain common bandwidth, and are thus subject to the robustness analysis of Theorem \ref{coro_1}. As it turns out, this is possible if the minimal separation between different translation nodes satisfies
\begin{align}\label{eq_P4}
\delta := \inf\{|\lambda_k-\lambda_j|: k\not=j\} > 1.
\end{align}
Under \eqref{eq_P3} and \eqref{eq_P4}, the series in \eqref{eq_P1} converges for $c \in \ell^2$.
Let $\mathcal{C}_{\mathrm{free}}(\tau, \delta, s, D)$
be the class of all functions $f$ as in \eqref{eq_P1} with $\|f\|_2 \leq 1$, $c \in \ell^2$,
\eqref{eq_P2}, \eqref{eq_P3}, and \eqref{eq_P4}.
\begin{prop}\label{prop_2}
Every function in $\mathcal{C}_{\mathrm{free}}(\tau, \delta, s, D)$ has bandwidth $\Omega$ up to tolerance $\bu=\theta$ provided that
\begin{align}\label{eq_P6}
\lfloor\Omega\rfloor \geq C_s \cdot D^{\tfrac{1}{s}} \cdot \big(
{\tau\cdot\theta\cdot \min\{\sqrt{\delta-1},1\} }\big)^{-\tfrac{1}{s}},
\end{align}
where $C_s$ is a constant that depends on $s$.
\end{prop}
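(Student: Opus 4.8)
The plan is to factor the Fourier transform as $\hat f(\xi)=m(\xi)\,\hat\varphi(\xi)$, where $m(\xi):=\sum_k c_k e^{-2\pi i\lambda_k\xi}$ is a nonharmonic Fourier series carrying the unknown weights $c_k$. The whole argument then runs parallel to the proof of Proposition~\ref{propo:example-SIS}: first I control $\|c\|_{\ell^2}$ in terms of $\|f\|_2\le 1$, and then I estimate the high-frequency tail $\int_{|\xi|>\Omega}|m|\,|\hat\varphi|$ by splitting it into unit intervals, applying Cauchy--Schwarz on each piece, and summing the resulting decaying series produced by the envelope \eqref{eq_P3}. The separation hypothesis \eqref{eq_P4} enters precisely through the two nonharmonic estimates that must replace the periodicity and Plancherel identities available in the shift-invariant case.

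For the bound on the coefficients I would combine Plancherel with the nonvanishing assumption \eqref{eq_P2}:
\[
\|f\|_2^2=\int_{\R}|m(\xi)|^2|\hat\varphi(\xi)|^2\,d\xi\ \ge\ \tau^2\int_{-1/2}^{1/2}|m(\xi)|^2\,d\xi .
\]
The key input is Ingham's inequality: since the frequencies $\lambda_k$ are $\delta$-separated with $\delta>1$, the system $\{e^{-2\pi i\lambda_k\xi}\}$ is a Riesz sequence on $[-1/2,1/2]$ and
\[
\int_{-1/2}^{1/2}|m(\xi)|^2\,d\xi\ \ge\ A(\delta)\,\|c\|_{\ell^2}^2,\qquad A(\delta)=\tfrac{1}{\pi}\bigl(1-\delta^{-2}\bigr),
\]
the lower constant being positive exactly because $\delta>1$ is the Ingham threshold for a length-$1$ window. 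The two displays give $\|c\|_{\ell^2}\le(\tau\sqrt{A(\delta)})^{-1}$, and a short computation shows $\sqrt{A(\delta)}\asymp\min\{\sqrt{\delta-1},1\}$ --- linear in $\delta-1$ near the threshold and bounded away from $0$ for large $\delta$ --- which is exactly the origin of the factor $\min\{\sqrt{\delta-1},1\}$ in \eqref{eq_P6}.

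For the tail I would reproduce the chain of inequalities of Proposition~\ref{propo:example-SIS}. Writing $\int_{|\xi|>\Omega}|m|\,|\hat\varphi|=\sum_{\ell\ge\lfloor\Omega\rfloor}\int_{\ell\le|\xi|<\ell+1}|m|\,|\hat\varphi|$ and applying Cauchy--Schwarz on each unit piece gives $\int_\ell^{\ell+1}|m|\,|\hat\varphi|\le(\int_\ell^{\ell+1}|m|^2)^{1/2}(\int_\ell^{\ell+1}|\hat\varphi|^2)^{1/2}$. The first factor is bounded by the upper (Bessel) half of Ingham's inequality, $\int_\ell^{\ell+1}|m|^2\le B\,\|c\|_{\ell^2}^2$ with $B$ uniform in $\ell$ and bounded for $\delta\ge1$, while the second is $\le D\,(1+\ell)^{-s-1}$ by \eqref{eq_P3}. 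Summing $\sum_{\ell\ge\lfloor\Omega\rfloor}(1+\ell)^{-s-1}\le s^{-1}\lfloor\Omega\rfloor^{-s}$ yields
\[
\int_{|\xi|>\Omega}|m|\,|\hat\varphi|\ \lesssim\ \frac{B^{1/2}D}{s}\,\|c\|_{\ell^2}\,\lfloor\Omega\rfloor^{-s}\ \le\ \frac{B^{1/2}D}{s\,\tau\sqrt{A(\delta)}}\,\lfloor\Omega\rfloor^{-s}.
\]
Requiring the right-hand side to be $\le\theta$ and solving for $\lfloor\Omega\rfloor$ gives exactly \eqref{eq_P6}, the $\delta$-dependence being carried by $1/\sqrt{A(\delta)}\asymp\min\{\sqrt{\delta-1},1\}^{-1}$ and all remaining $s$- and absolute constants absorbed into $C_s$.

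The main obstacle is the quantitative Ingham step: one needs the \emph{explicit} lower constant $A(\delta)=\tfrac1\pi(1-\delta^{-2})$ rather than a merely qualitative Riesz-sequence statement, and one must verify that its square root degenerates like $\min\{\sqrt{\delta-1},1\}$ so as to match \eqref{eq_P6}. A secondary technical point is confirming that the upper Bessel constant $B$ can be chosen uniformly over all $\delta\ge1$, so that it does not reintroduce $\delta$-dependence into $C_s$; this follows from the standard Plancherel--Polya bound for separated sequences.
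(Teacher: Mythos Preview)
Your proposal is correct and follows essentially the same route as the paper: factor $\hat f=m\,\hat\varphi$, invoke the quantitative Ingham bounds to control $\int_\ell^{\ell+1}|m|^2$ uniformly in $\ell$ in terms of $\|f\|_2/\tau$ (with the lower constant degenerating like $\delta-1$ near the threshold, producing the factor $\min\{\sqrt{\delta-1},1\}$), and then sum the envelope decay over unit intervals. The only cosmetic difference is that you pass through $\|c\|_{\ell^2}$ explicitly, whereas the paper compares $\int_\ell^{\ell+1}|m|^2$ directly to $\int_{-1/2}^{1/2}|m|^2$ via the ratio $B/A$; the resulting bounds are identical.
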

\begin{rema}
The proof below provides an explicit constant $C_s$. Important for us is the conclusion that the approximate bandwidth is impacted by the minimal separation allowed between different nodes, and that this effect becomes more moderate as the frequency concentration of $\varphi$ grows (that is, as $s$ becomes large).
\end{rema}
\begin{proof}[Proof of Proposition \ref{prop_2}]
Let $f$ be as in \eqref{eq_P1} with $\|f\|_2 \leq 1$ and $c$ finitely supported (the general case follows by a density argument). Then $\hat{f}(\xi)=m(\xi) \hat{\varphi}(\xi)$ with $m(\xi)=\sum_{k} c_{k} e^{-2\pi i \lambda_k\xi}$. We invoke Ingham's inequality \cite{Ing36, MR2114325} in the quantitative form of \cite[Proposition 2.1 and 2.2]{JamSaba}, which gives
\begin{align}
A \|c\|_{\ell^2}^2 \leq \int_{h}^{h+1} |m(\xi)|^2 \,d\xi \leq B \|c\|_{\ell^2}^2, \qquad h \in \mathbb{R},
\end{align}
with $$A=\begin{cases}
        \frac{\pi^2}{8} \left(\frac{\delta^2-1}{\delta^3}\right),\quad & 1<\delta\leq 2,\\
        \frac{3\pi^2}{64} & \delta\geq 2.
    \end{cases}$$
and $$B=2(\delta+1)/\delta;$$
see \hyperref[note3]{End Note 3} for more details. While $m$ may not be periodic, 
our assumptions allow us to compare its norm on any two intervals of length 1. Indeed, for every $\ell \in \mathbb{Z}$,
\begin{align}
\int_{\ell}^{\ell+1} |m(\xi)|^2\,d\xi &\leq
{\frac{B}{A}} \int_{-1/2}^{1/2} |m(\xi)|^2\,d\xi \leq {\frac{B}{A}} \frac{1}{\tau^2} 
\int_{-1/2}^{1/2} |m(\xi)|^2 |\hat{\varphi}(\xi)|^2\,d\xi
\\
&\leq {\frac{B}{A}} \frac{1}{\tau^2} \|f\|_2^2 \leq{\frac{B}{A}} \frac{1}{\tau^2} \leq
\frac{C^2}{\tau^2\cdot\min\{\delta-1,1\}},
\end{align}
for a constant $C>0$. We now proceed as in the proof of Proposition \ref{propo:example-SIS}:
\begin{align}
			\int_{|\xi|>\Omega} |\hat{f}(\xi)| \,d\xi
			&\leq  D \int_{\xi>\lfloor \Omega \rfloor}  (|m(\xi)|+|m(-\xi)|) (1+\xi)^{-s-1} \,d\xi\\
			&\leq D  \sum_{\ell\geq \lfloor \Omega \rfloor} (1+\ell)^{-s-1}
   \int_{\ell}^{\ell+1} 
   (|m(\xi)| + |m(-\xi)|) \,d\xi\\
    &\leq \frac{2CD}{\tau\cdot\min\{\sqrt{\delta-1},1\}}  
    \sum_{\ell\geq \lfloor \Omega \rfloor}(1+\ell)^{-s-1} 
    \\
    		&\leq \frac{2CD}{\tau\cdot\min\{\sqrt{\delta-1},1\} \cdot s\lfloor \Omega\rfloor ^{s}} \leq \theta,
		\end{align}
if $\Omega$ is as in \eqref{eq_P6} for an adequate constant $C_s$.
\end{proof}

\section{Numerical experiments}\label{sec_num}
We considered a test signal of the form
$f(t) = \sum_{k=1}^{8} c_k \phi(t-t_k)$,
with $\phi$ the B-spline of order $5$ and the nodes $t_k$  chosen uniformly at random within the interval $[0,20]$; see Figure \ref{fig_exp1}. We performed various numerical experiments with $\alpha=0.5$, both for encoding and decoding, i. e., $\Delta_\alpha = 0$. Decoding was performed with the bandwidth-based Ansatz \eqref{eq_ba}, where the frequency cut-off $\psi$ was implemented with a raised-cosine with a roll-off of $0.5$ and bandwidth (reciprocal symbol rate) equal to $1$ \cite{glover2010digital}.
The experiments were carried out with GNU Octave \cite{octave} and can be reproduced with freely available software \cite{cr-code}.

Figure \ref{fig_exp2} shows reconstructed signals for various threshold levels, while the corresponding reconstruction error is plotted in Figure \ref{fig_exp3}. Figure \ref{fig_exp4} illustrates the decay of the reconstruction error as the spike uncertainty decreases. The results are consistent with Theorem \ref{thm:main}. We also performed experiments with positive leakage uncertainty, and observed only a minimal impact on the reconstruction error, even for $\lu \approx \alpha_0$. This stands in contrast to the bound in Theorem \ref{thm:main}, which blows up in that regime. In future work, we expect to further elucidate the apparent remarkable robustness of the bandwidth-based Ansatz under leakage uncertainty. 

	\begin{figure}[ht]
		\centering
		\includegraphics[width=1\textwidth]{./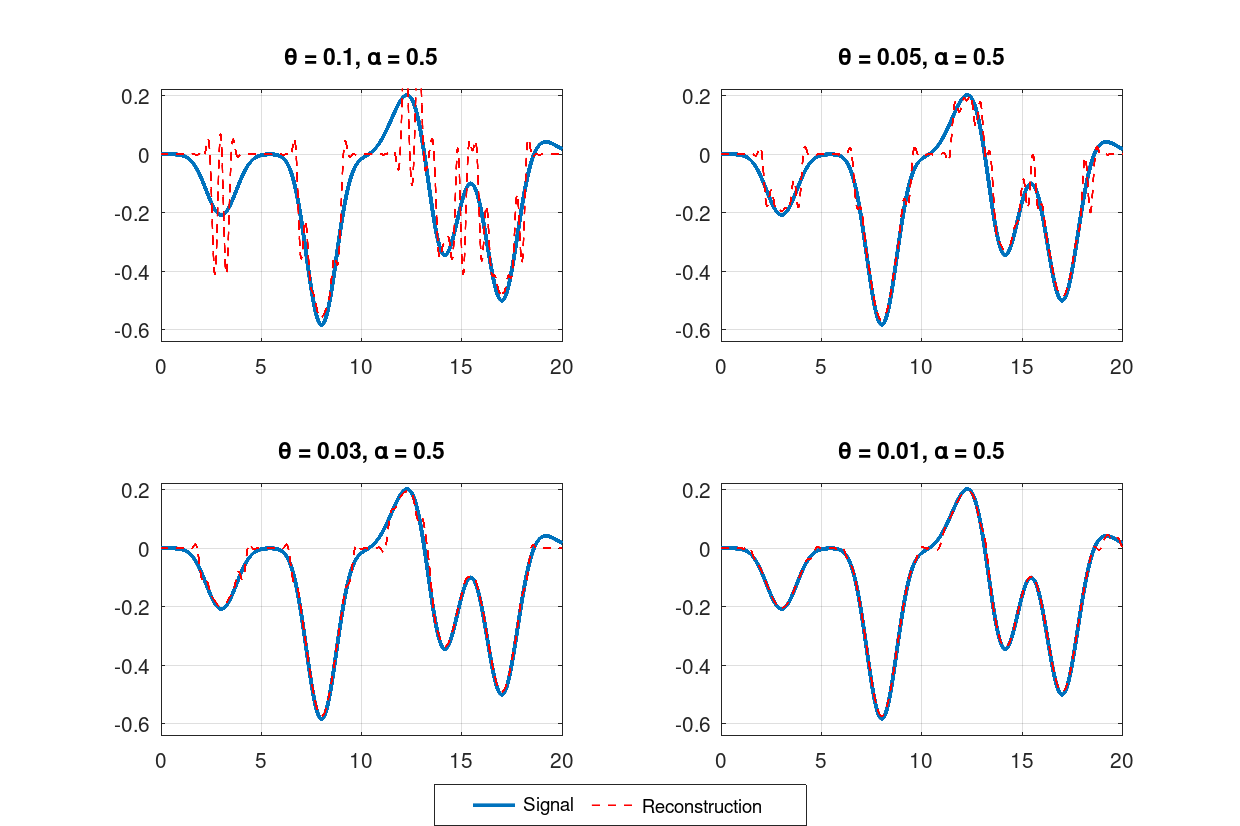}
		\caption{Reconstructions of the test signal with different values of $\theta$.}
		\label{fig_exp2}
	\end{figure}

	\begin{figure}[ht]
	\centering
	\includegraphics[width=0.6\textwidth]{./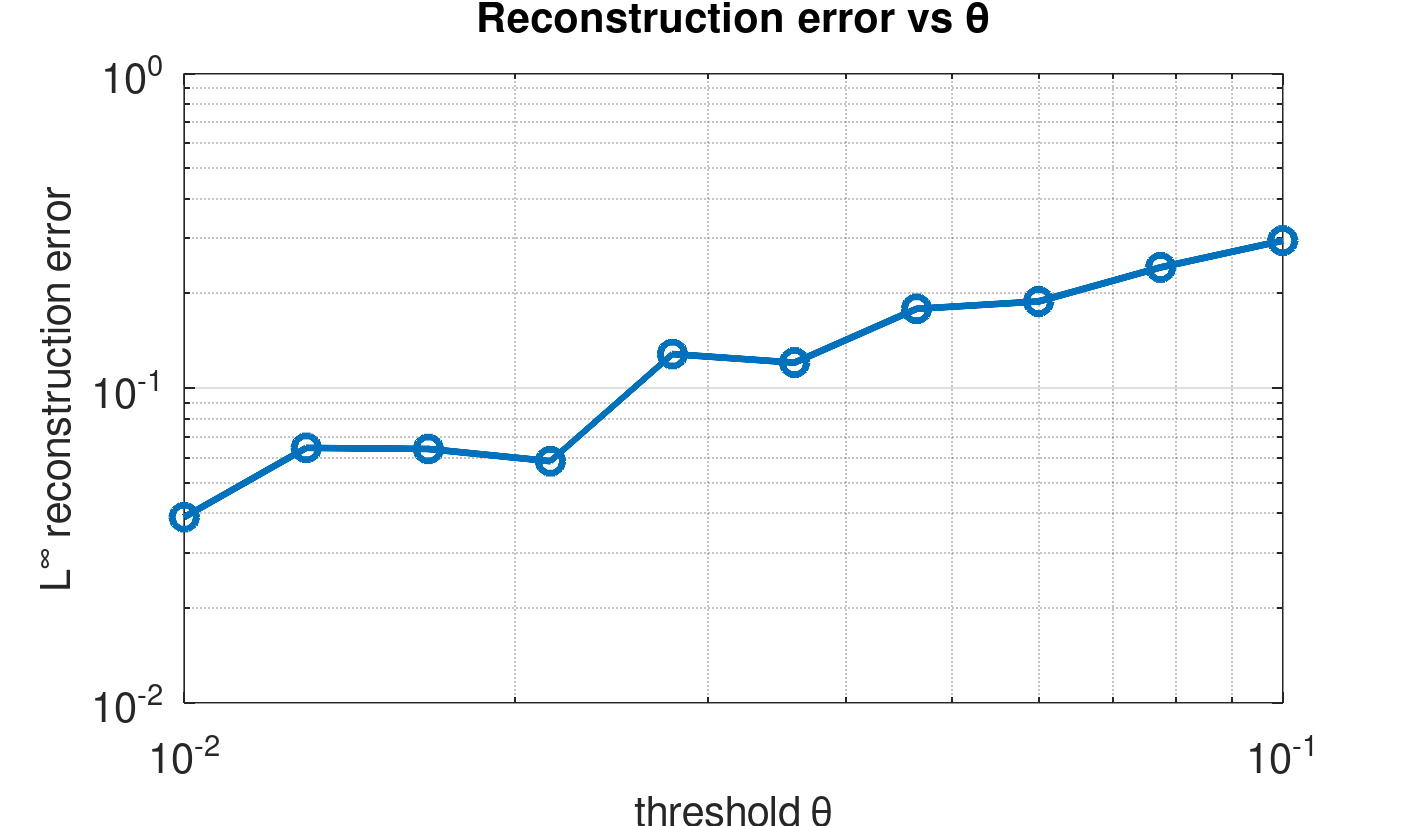}
	\caption{$L^\infty$ reconstruction error for different values of $\theta$.}
	\label{fig_exp3}
\end{figure}

	\begin{figure}[ht]
	\centering
	\includegraphics[width=0.6\textwidth]{./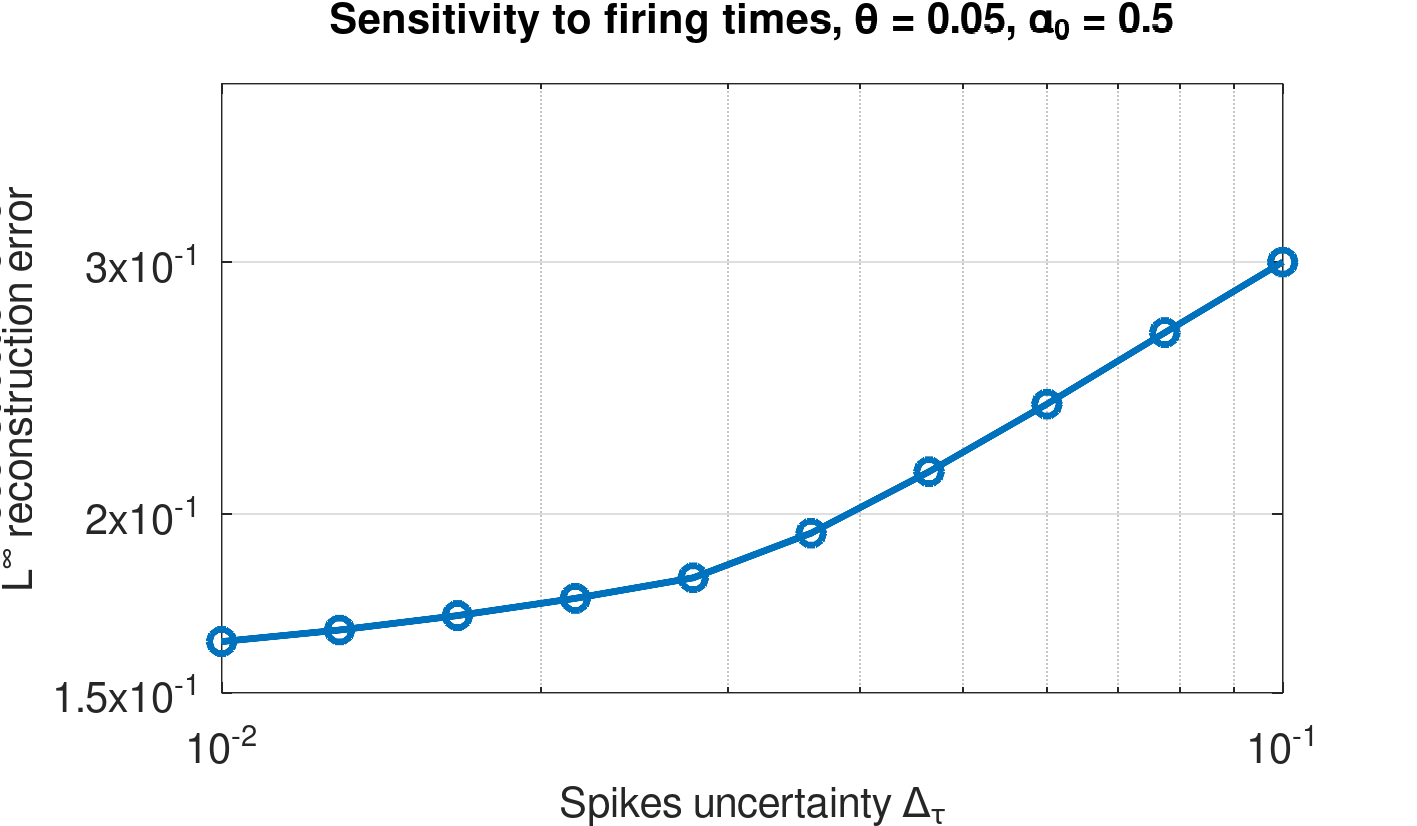}
	\caption{$L^\infty$ reconstruction error for different levels of spike uncertainty.}
	\label{fig_exp4}
\end{figure}

\section{Conclusions}\label{sec_con}
Theorems \ref{thm:main} and \ref{coro_1} show that commonly considered classes of signals can be effectively encoded by an IF sampler as long as the firing threshold $\theta$ is suitably small in comparison to the inverse of the \emph{model approximate bandwidth}, a notion that applies to many commonly used signal models beyond bandlimited functions. Thus, sufficiently different signals cannot have the same or very similar IF outputs. Moreover the conclusions remain true under uncertainties on the leakage parameter and the firing spikes as long as these are comparable in magnitude to the threshold $\theta$.

\section{End notes}\label{sec_notes}
\subsection{End note 1}\label{note1}
A function satisfying conditions \eqref{eq_A1} 
may be obtained as follows: we first let 
$v\in C^{\infty}(\R)$ with $\supp(v)\subset [-1/2,1/2]$, $|\hat{v}(\xi)| \leq C e^{-2 \sqrt{|\xi|}}$ and $\int v = 1$; see, e.g., \cite[Proposition 2.8]{DH98}, and then set $\hat{\psi} := v * 1_{[-3/2,3/2]}$.

\subsection{End note 2}\label{note2} In the proof of Proposition \ref{propo:example-SIS} and \ref{prop_2} we used that for 
$M \in \mathbb{N}_0$:
\begin{align*}
\sum_{\ell\geq M}(1+\ell)^{-s-1}
\leq \sum_{\ell\geq M} \int_{\ell}^{\ell+1} x^{-s-1} \,dx = \int_{M}^{\infty} x^{-s-1} \,dx= \tfrac{1}{s} M^{-s}.
\end{align*}

\subsection{End note 3}\label{note3} In the proof
of Proposition \ref{prop_2}, we applied \cite[Proposition 2.1 and 2.2]{JamSaba} after rescaling and translation. More precisely, since $\lambda_{k+1}-\lambda_{k}\geq \delta > 1$, the cited reference gives
\begin{equation}
    A  \sum_{k\in\Z} |a_k|^2 \leq 
    \frac{1}{\delta}
 \int_{-\delta/2}^{\delta/2}\left| \sum_{k\in\Z} a_k e^{2\pi i \frac{\lambda_k}{\delta} t} \right|^2\,dt =  \int_{-1/2}^{1/2}\left| \sum_{k\in\Z} a_k e^{2\pi i \lambda_k t} \right|^2\,dt \leq B \sum_{k\in\Z} |a_k|^2,
\end{equation}
with $A,B$ as in the proof of Proposition \ref{prop_2}. We can extend the conclusion to every interval $[h,h+1]$ in lieu of $[-1/2,1/2]$; see also \cite{MR1794544}.

\printbibliography
\end{document}